\documentclass[12pt]{amsart}
\usepackage[normalem]{ulem}
\usepackage{environ, xcolor}
\NewEnviron{commentA}{}
\newcommand\Aon{\RenewEnviron{commentA}{\color{purple}\BODY}}

\Aon{} 

\usepackage{amsmath, amssymb}
\usepackage{array}
\usepackage[frame,cmtip,arrow,matrix,line,graph,curve]{xy}
\usepackage{graphpap, color, paralist, pstricks}
\usepackage[mathscr]{eucal}
\usepackage[pdftex]{graphicx}
\usepackage[pdftex,colorlinks,backref=page,citecolor=blue]{hyperref}
\usepackage{cleveref}
\usepackage{tikz-cd, verbatim}
\usepackage{colonequals}

\usepackage{setspace}

\setlength{\oddsidemargin}{0in}
\setlength{\evensidemargin}{0in}
\setlength{\marginparwidth}{0in}
\setlength{\marginparsep}{0in}
\setlength{\marginparpush}{0in}
\setlength{\topmargin}{0in}
\setlength{\headsep}{8pt}
\setlength{\footskip}{.3in}
\setlength{\textheight}{9in}
\setlength{\textwidth}{6.5in}
\linespread{1.05}

\setlength{\parskip}{.13in minus .03in} 
\setlength{\parindent}{.3 in}

\newtheorem{theorem}{Theorem}[section]
\newtheorem{proposition}[theorem]{Proposition}

\newtheorem{lemma}[theorem]{Lemma}

\theoremstyle{definition}

\newtheorem{example}[theorem]{Example}

\newtheorem{question}[theorem]{Question}

\usepackage[margin=1in]{geometry} 
\usepackage{amsmath,amssymb,mathtools, mathrsfs,esint,tikz-cd,verbatim}

\newcommand{\Z}{{\mathbb Z}}
\newcommand{\Q}{{\mathbb Q}}

\newcommand{\C}{{\mathbb C}}
\newcommand{\A}{{\mathbb A}}

\newcommand{\M}{{\mathcal M}}
\newcommand{\D}{{\mathscr{D}}}
\renewcommand{\P}{{\mathbb P}}
\renewcommand{\O}{{\mathcal O}}
\renewcommand{\H}{{\mathcal H}}

\newcommand{\on}[1]{\operatorname{#1}}

\newcommand{\Aut}{{\on{Aut}}}

\newcommand{\id}{{\on{id}}}

\newcommand{\rRepl}{\on{rRepl}}

\DeclareMathOperator{\Spec}{\mathrm{Spec}}
\DeclareMathOperator{\Bir}{\mathrm{Bir}}

\DeclareMathOperator{\bir}{\simeq_{\mathrm{bir}}}

\DeclareMathOperator{\Aff}{\mathrm{Aff}}
\DeclareMathOperator{\Tr}{\mathrm{Tr}}
\DeclareMathOperator{\SL}{\mathrm{SL}}


\newcommand{\Set}[2]{\left\{\,#1 \ \middle| \ #2\,\right\}}


\subjclass[2020]{14E07; 14L30}

\title{Characterizing Varieties Using Birational Transformations}

\author{Nathan Chen}
\address{Department of Mathematics, Harvard University, 1 Oxford Street, Cambridge, MA 02138}
\email{nathanchen@math.harvard.edu}

\author{Louis Esser}
\address{Department of Mathematics, Princeton University, Fine Hall, Washington Road, Princeton, NJ 08544-1000, USA}
\email{esserl@math.princeton.edu}

\author{Andriy Regeta}
\address{Dipartimento di Matematica,
Universit\`a di Padova, Via Trieste 63, I-35121 Padova}
\email{andriyregeta@gmail.com}

\author{Christian Urech}
\address{Department of Mathematics, ETH Zurich, 8092 Zurich, Switzerland}
\email{christian.urech@math.ethz.ch}

\author{Immanuel van Santen}
\address{Mathematisches Institut, Universit\"at Bern, Sidlerstrasse 5, 3012 Bern, Switzerland}
\email{immanuel.van.santen@math.ch}

\begin{document}

\begin{abstract}
Suppose $X$ is an irreducible complex variety.  We show that when $X$ is ruled,
the group of birational transformations $\Bir(X)$, as a group, determines $X$
up to birational transformations and automorphisms of the base field.  
In contrast, we demonstrate that this same property never holds for non-uniruled
varieties.
\end{abstract}

\maketitle

\begin{spacing}{0}
\tableofcontents
\end{spacing}

\section{Introduction}

It's often the case that the group of symmetries of a mathematical object 
determines this object, up to isomorphism.  For instance, this is true in the 
setting of differential manifolds \cite{Filipkiewicz} and compact topological manifolds \cite{Whittaker63}.

For algebraic varieties $X$ the situation is different as ``algebraic symmetries" are usually too rigid to determine $X$;
indeed, most varieties have a trivial group of symmetries. However, if an algebraic variety has many symmetries, the situation changes. For example, in the category of affine varieties, if $X \simeq \mathbb{A}^n$, then $X$ is determined by its automorphism group \cite{CaReXi2023Families-of-commut} (see also \cite{RvS25}).
The motivating question behind this paper is the following:
\begin{question}
\label{quest:main}
To what extent does the group of birational transformations $\Bir(X)$ of a variety $X$ determine $X$?
\end{question}
Previous work of the last three authors shows that for complex varieties,
the group of birational 
transformations characterizes rationality and ruledness \cite{RUvS24b} 
(see also \cite{Ca2014Morphisms-between-}). More
precisely, given an arbitrary irreducible variety $X$, the existence of a group isomorphism
$\Bir(X) \cong \Bir(\P^n)$ implies that $X$ is birational to $\P^n$; similarly,
the existence of
a group isomorphism $\Bir(X) \cong \Bir(Y \times \P^1)$ for some other variety $Y$
implies that $X$ is ruled, i.e., birational to $Z \times \P^1$ for some variety
$Z$.  However, this statement does not provide any information about $Z$. 

In this paper, we strengthen this result by showing that for any ruled 
complex variety $X$, the group $\Bir(X)$ determines $X$, up to birational 
transformations and automorphisms of the base field $\C$. Our main result is the following:
\begin{theorem}
\label{thm:intromain}
Let $X$ and $Y$ be irreducible
varieties over $\C$ with the property that there is
a group isomorphism $\Bir(X) \cong \Bir(\mathbb{A}^s \times Y)$ for some positive integer $s$. Then $X$ is birational to $\A^{s} \times Z$ for some irreducible variety $Z$ which is birational to $Y$, up to base change by a field automorphism of $\C$.
\end{theorem}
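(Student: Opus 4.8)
The plan is to recover the product structure of $\A^s\times Y$ from the abstract group $\Bir(\A^s\times Y)$ by isolating, in purely group-theoretic terms, the transformations that translate along the affine factor, and then to reconstruct the base $Y$ together with its field of definition from the normalizer of these translations. Throughout I replace $\A^s\times Y$ by the birational model $\P^s\times Y$. As a starting point, the ruledness result of \cite{RUvS24b} already yields, from $\Bir(X)\cong\Bir(\A^s\times Y)$ with $s\ge 1$, that $X$ is ruled; the entire content of the theorem is to upgrade ``ruled'' to ``birational to $\A^s\times Z$ with $Z$ birational to $Y$'' and to explain the field-automorphism ambiguity.

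Write $K=\C(Y)$. The distinguished subgroup is the group $V\le\Bir(\A^s\times Y)$ of fibered translations, i.e.\ the image of $\G_a^s$ acting on $\A^s$ over the generic point of $Y$, so that $V\cong(K,+)^{\,s}$; together with the linear action, its normalizer contains the relative affine group $\Aff_s(K)=V\rtimes\GL_s(K)$ and, above it, a lift of the transformations moving $Y$. Mirroring the strategy used to characterize $\A^n$ through commuting additive actions \cite{CaReXi2023Families-of-commut}, I would describe $\Aff_s(K)$ group-theoretically by the properties that $V$ is abelian with all elements unipotent (detected in characteristic zero by unique divisibility together with torsion-freeness), that $V$ carries a complement acting on it as the standard $\GL_s$-representation, and that the induced affine quotient of the ambient variety has the expected dimension. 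The decisive difficulty is that this fingerprint must be invariant under an \emph{arbitrary} abstract isomorphism of $\Bir$-groups: in particular, when $Y$ is itself uniruled it contributes additional additive actions, so that a naive ``maximal commuting unipotent subgroup'' would overshoot, and the characterization must instead pin down the $\GL_s$-action and the affine-space orbit structure exactly. Establishing this invariance is the main obstacle, and it is what forces the statement to single out the $\A^s$-factor rather than an arbitrary rationally connected direction.

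Granting that the isomorphism $\phi\colon\Bir(X)\xrightarrow{\ \sim\ }\Bir(\A^s\times Y)$ carries $\Aff_s(K)$ to a subgroup of $\Bir(X)$ of the same type, its unipotent part $V'=\phi^{-1}(V)$ acts faithfully and additively on $X$ with general orbits birational to $\A^s$, so that $X$ is birational to $\A^s\times Z$, where $Z$ is the rational quotient $X/\!/V'$. To identify $Z$ with $Y$ I reconstruct the base field: the linear action of $\GL_s(K)$ on $V\cong K^s$ recovers $K=\C(Y)$ as an abstract field, up to a field automorphism, via the fundamental theorem of projective geometry applied to the projective space of $V$ when $s\ge 3$ and via the classical rigidity of $\GL_2$ and of the $ax+b$-group in the two small-rank cases; the constant affine subgroup $\Aff_s(\C)\le\Aff_s(K)$ (transformations defined over the base field and trivial along $Y$) must be isolated simultaneously, so as to recover the inclusion of the field of constants $\C$ into $K$, and it is precisely this inclusion that is only canonical up to an automorphism $\tau\in\Aut(\C)$. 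Performing the same reconstruction on the $X$-side and comparing through $\phi$ gives an isomorphism of abstract fields $\C(Z)\cong\C(Y)$ carrying constants to constants up to $\tau$; since a birational equivalence of $\C$-varieties is exactly an isomorphism of function fields fixing $\C$, base change along $\tau$ turns this into an isomorphism of $\C$-algebras, whence $Z$ is birational to $Y^{\tau}$. Combining this identification with the additive orbit structure yields that $X$ is birational to $\A^s\times Z$ with $Z$ birational to $Y$ up to base change by a field automorphism of $\C$, as claimed.
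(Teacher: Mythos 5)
Your outline follows the same broad strategy as the paper---isolate the relative translation group, pass to its normalizer, and recover the function field from the linear action on the translations---but the two steps you explicitly defer are precisely where all of the work lies, so as written this is a plan rather than a proof. First, the assertion that an abstract isomorphism $\Xi\colon \Bir(\A^s\times Y)\to\Bir(X)$ carries unipotent elements to unipotent elements (your self-described ``decisive difficulty'') is the technical heart of the argument; the paper proves it in \Cref{lem:unip} using rational replicas of $\G_{\mathrm{a}}$- and $\G_{\mathrm{m}}$-actions, the fact that self-centralizing subgroups are closed in the Zariski topology on $\Bir$, the countability of the component group of a closed subgroup, and \cite[Theorem 6.1]{RUvS24b}. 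Nothing in your sketch substitutes for this, and ``granting'' it concedes the theorem's main content. Second, even granting preservation of unipotents, your claim that $V'=\Xi(V)$ ``acts faithfully and additively on $X$ with general orbits birational to $\A^s$'' does not follow: $V\cong(\C(Y)^+)^s$ is, as an abstract group, merely a $\Q$-vector space of continuum dimension, so its image could a priori be a translation group of any rank $r$, or fail to be algebraic at all. The paper handles this by showing $\Tr_s(\C(Y))$ is a maximal commutative subgroup of unipotents (via self-centralization of rational replicas), extracting a finite-dimensional algebraic subgroup $U$ of the image with the same field of invariants, and invoking Brion's theorem to get $X\bir\A^r\times Z$; the equality $r=s$ is only obtained at the end, from O'Meara's isomorphism theorem for $\SL_n$ when $s>1$ and from the maximality of $s$ when $s=1$.

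A further gap is the separation of $\GL_s(K)$ (or $K^*$ when $s=1$) from the $\Bir(Y)$-factor inside the normalizer $\Aff_s(K)\rtimes\Bir(Y)$ of the translation group. Your ``fingerprint'' must be visible to an arbitrary abstract isomorphism, and the paper achieves this for $s>1$ by observing that the subgroup generated by unipotents in the normalizer is exactly $\SL_s(K)\ltimes\Tr_s(K)$ once $\Bir(Y)$ has no nontrivial unipotents (guaranteed by maximality of $s$), and then exploiting almost-simplicity of $\SL_s(K)$ to kill any unipotent contribution from $\Bir(Z)$. For $s=1$ your appeal to ``rigidity of the $ax{+}b$ group'' is insufficient: $\SL_1$ is trivial, and after quotienting by translations one faces $\C(Y)^*\rtimes\Bir(Y)$, in which the multiplicative group cannot be isolated by affine-group rigidity alone; the paper instead reconstructs $\mathrm{PGL}_2(\C(Y))$ group-theoretically via the doubling element $\operatorname{diag}(2,1)$ and the Weyl involution $w$, and only then applies O'Meara. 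Finally, your worry about pinning down the inclusion $\C\subset K$ is unnecessary: any abstract field isomorphism $\C(Z)\to\C(Y)$ automatically restricts to an automorphism of $\C$, since $\C$ is the set of infinitely divisible elements of the function field (\Cref{lem:field_iso}), which is where the field automorphism $\tau$ in the statement actually comes from.
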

\noindent In particular, some birational models of $X$ and $\mathbb{A}^s \times Y$ are 
isomorphic as schemes (over $\Z$). A key part of the proof is to show that 
isomorphisms of birational transformation groups
carry unipotent elements to unipotent elements (cf. Lemma~\ref{lem:unip}).

The conclusion above is the best we can hope for in this generality.
Indeed, if $V$ is an 
irreducible variety, $\tau: \C \rightarrow \C$ is a field isomorphism, and 
$V^{\tau}$ is the variety obtained after a base change of $V$ by $\tau$, 
then $\tau$ naturally induces a group isomorphism between $\Bir(V^{\tau})$ and 
$\Bir(V)$. In other words, we cannot distinguish $V^{\tau}$ and $V$
using only the group structure 
of their groups of birational transformations. In the literature, the two varieties $V, V^{\tau}$ are said to be \textit{conjugate}. We note that conjugate varieties can look quite different. Although their Betti numbers must be the same, as Serre showed that these are algebraic invariants for smooth projective varieties, other topological 
invariants may differ.  Indeed, Serre \cite{Serre} exhibited pairs of conjugate smooth varieties that
have non-isomorphic fundamental groups; many other related examples have since been discovered
(see, e.g., \cite{Abelson}, \cite{ABCRCA}, \cite{Shimada09}, etc.). Hence $V$ and
$V^{\tau}$ may not even be smoothly deformation equivalent.

In contrast to \Cref{thm:intromain}, when a complex variety $X$ is not covered by rational curves, 
the group structure of $\Bir(X)$
is \textit{never} enough to determine the scheme $X$, or even its dimension.
More precisely, we show:
\begin{theorem}
\label{thm:introBirX=BirXxC}
Let $X$ be an irreducible non-uniruled variety of dimension $n \geq 1$, and
let $C$ be a very general curve of genus $g \geq \max \{ 3, n+1 \}$. Then $\Bir(X) \cong \Bir(X \times C)$.
\end{theorem}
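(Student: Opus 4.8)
The plan is to prove that the natural injective homomorphism
$\iota\colon\Bir(X)\hookrightarrow\Bir(X\times C)$, $\psi\mapsto\psi\times\id_C$, is in fact an isomorphism; the entire content is its surjectivity, i.e.\ that every $\phi\in\Bir(X\times C)$ has the form $\psi\times\id_C$. Since both $\Bir$ and non-uniruledness are birational invariants, I first replace $X$ and $C$ by smooth projective models without changing either side. Fix $\phi\in\Bir(X\times C)$ and let $q\colon X\times C\to C$ be the second projection. I will show in two steps that (1) $\phi$ preserves $q$ and induces the identity on $C$, so that $\phi(x,c)=(\phi_c(x),c)$ for a family of birational self-maps $\phi_c$ of $X$, and (2) this family is constant.

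\textbf{Step 1 (the factor $C$ is canonical).} Because the Albanese is a birational invariant and $\on{Alb}(X\times C)=\on{Alb}(X)\times J(C)$, where $J(C)$ is the Jacobian, $\phi$ induces an affine automorphism of $\on{Alb}(X)\times J(C)$ whose linear part is a group automorphism $\phi_*$. For $C$ very general of genus $g$, the Jacobian $J(C)$ is simple with $\on{End}(J(C))=\Z$ and is not isogenous to any simple isogeny factor of the fixed abelian variety $\on{Alb}(X)$; by simplicity this forces $\Hom(J(C),\on{Alb}(X))=\Hom(\on{Alb}(X),J(C))=0$, so $\phi_*$ is block diagonal and in particular preserves the factor $J(C)$, acting there by some $\gamma\in\Aut(J(C))$. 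Composing $q$ with the Abel--Jacobi embedding $C\hookrightarrow J(C)$ identifies the fibers of $q$ with the fibers of $\alpha:=\mathrm{AJ}\circ q\colon X\times C\to J(C)$, and functoriality of the Albanese gives $\alpha\circ\phi=\gamma\circ\alpha$ up to translation. As $\gamma$ is injective (and the off-diagonal term vanishes by the second $\Hom$-vanishing), $\phi$ carries fibers of $q$ to fibers of $q$ and hence descends to $\bar\phi\in\Bir(C)=\Aut(C)$. Since $C$ is very general of genus $g\ge 3$ it has no nontrivial automorphisms, so $\bar\phi=\id_C$ and $\phi$ preserves every fiber $X\times\{c\}$.

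\textbf{Step 2 (rigidity of the fiberwise family).} Writing $\phi(x,c)=(\phi_c(x),c)$ and composing with $\iota(\phi_{c_0}^{-1})$ for a fixed general $c_0$, I may assume $\phi_{c_0}=\id_X$ and must show $\phi_c\equiv\id_X$. Near $c_0$ the graph of $\phi_c$ is a deformation of the diagonal $\Delta_X\subset X\times X$ that remains a graph; since $N_{\Delta_X/X\times X}\cong T_X$ and $H^0(X,T_X)=\on{Lie}\Aut^0(X)$, such deformations are exactly the graphs of elements of $\Aut^0(X)$, so $c\mapsto\phi_c$ is a morphism from (an open of) $C$ into the connected algebraic group $\Aut^0(X)$ sending $c_0\mapsto 0$. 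The crucial input is that $X$ is non-uniruled: then $\Aut^0(X)$ contains no $\G_a$ or $\G_m$ (whose orbits would sweep out rational curves), so by Chevalley's theorem $\Aut^0(X)$ is an abelian variety $B'$. A faithful action of an abelian variety on $X$ has finite generic stabilizer—otherwise the common identity component of the generic stabilizers (there are only countably many abelian subvarieties) would act trivially—so $\dim B'\le\dim X=n$. Here the hypothesis $g\ge n+1$ enters: $J(C)$ is simple of dimension $g>n\ge\dim B'$, whence $\Hom(J(C),B')=0$. The morphism $C\to B'$ therefore factors through a homomorphism $J(C)\to B'$, which is zero, so it is constant; being $0$ at $c_0$ it is identically $0$, i.e.\ $\phi_c\equiv\id_X$.

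Combining the two steps, every $\phi\in\Bir(X\times C)$ equals $\phi_{c_0}\times\id_C=\iota(\phi_{c_0})$, so $\iota$ is surjective and hence an isomorphism, proving \Cref{thm:introBirX=BirXxC}. I expect \textbf{Step 2} to be the main obstacle: the delicate point is passing from an a priori arbitrary family of \emph{birational} self-maps to a \emph{morphism} into the abelian variety $\Aut^0(X)$, which relies on non-uniruledness via the deformation theory of the diagonal, together with the vanishing $\Hom(J(C),\Aut^0(X))=0$ guaranteed by $g\ge n+1$ and the simplicity of $J(C)$. By contrast Step 1 is fairly formal once one observes that a very general $J(C)$ is simple and generically chosen, hence unlinked from $\on{Alb}(X)$, and that $g\ge 3$ kills $\Aut(C)$.
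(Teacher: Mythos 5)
Your argument is genuinely different from the paper's: the paper never looks at the Albanese, but instead shows (using a dimension count on Kontsevich spaces of stable maps together with the pseudoeffectivity of $K_X$ from \cite{BDPP13}) that a very general $C$ admits \emph{no} dominant map $C\times Y\dashrightarrow X$ with $\dim Y=n-1$, deduces that any $\varphi\in\Bir(X\times C)$ contracts the curves $\{x\}\times C$, and hence descends along the \emph{first} projection; the identity on $C$ then comes for free from $\Aut(C)=\{1\}$. Your Step 1 is correct (for very general $C$ the Jacobian $J(C)$ is simple with $\on{End}=\Z$ and not isogenous to any simple factor of the fixed abelian variety $\on{Alb}(X)$, so the induced automorphism of $\on{Alb}(X)\times J(C)$ is block diagonal and $\varphi$ preserves the second projection), and it buys you something the paper's route does not need: a reduction to the fiberwise statement that $\Bir(X\times C/C)=\Bir(X)$.

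The gap is exactly where you flag it, in Step 2, and your proposed fix does not close it. The maps $\phi_c$ are a priori only \emph{birational}, so the fibers of the graph $\Gamma_\phi\subset X\times X\times C$ over $C$ are not deformations of $\Delta_X$ in the sense required: even after normalizing $\phi_{c_0}=\id_X$, the flat limit of $\Gamma_{\phi_c}$ at $c_0$ is only guaranteed to \emph{contain} $\Delta_X$; for $n\geq 2$ it can acquire extra $n$-dimensional components supported over the indeterminacy and exceptional loci (which is precisely what happens when a family of birational maps degenerates), and then the component of the Hilbert or Chow variety containing the general $[\Gamma_{\phi_c}]$ is not the one through $[\Delta_X]$, so the identification of the family with a morphism into $\Aut^0(X)$ via $H^0(N_{\Delta_X/X\times X})=H^0(T_X)$ breaks down. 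What is actually needed is the statement that an algebraic family $C\to\Bir(X)$ through the identity generates a connected algebraic subgroup of $\Bir(X)$, which by Weil regularization, Matsumura's theorem and Chevalley's theorem is an abelian variety of dimension at most $n$ acting regularly on some birational model (Hanamura's structure theory for non-uniruled varieties, or the framework of \cite{RUvS24a}); only then does your factorization through $J(C)\to\Aut^0$ and the vanishing $\Hom(J(C),\Aut^0)=0$ for $g\geq n+1$ apply. With that input substituted for the diagonal-deformation argument, your proof would go through and would be a legitimate alternative to the paper's; as written, Step 2 assumes the conclusion that the $\phi_c$ are biregular.
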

\noindent The situation in the intermediate case, where $X$ is uniruled but not ruled, is 
much less clear.  Since these varieties do not carry faithful birational 
actions of 
positive-dimensional linear algebraic groups \cite[Corollary 1]{Matsumura63}, the methods behind the proof of
\Cref{thm:intromain} do not apply.  Nevertheless, some varieties of this type,
such 
as cubic hypersurfaces or more generally conic bundles, can have birational transformation groups that are quite large.  We
illustrate in \Cref{sect:nonruled} the range of behavior that can occur in this 
setting with various examples.

Throughout the paper, we work over the complex numbers $\C$.  The same results 
hold for any uncountable algebraically closed field $k$ of characteristic zero. A \textit{variety} is a separated integral scheme of finite type over $k$.

\subsection*{Acknowledgements}

We would like to thank Rob Lazarsfeld for helpful discussions. During the preparation of this article, C.U. was partially supported by the SNSF grant 10004735 and I.v.S. was supported by the SNSF grant 10000940.

\section{Birational transformations of ruled varieties}
\label{sect:mainthm}

The goal of this section is to prove \Cref{thm:intromain}.
The proof will crucially use the study of unipotent elements
of birational transformation groups from \cite[Section 6]{RUvS24b}.

\subsection{Preliminaries}

Our first observation is that every field isomorphism
between function fields of complex varieties can be decomposed into an automorphism of the 
base field $\C$ and an isomorphism of function fields over $\C$. In other words,
if $V_1$ and $V_2$ are varieties such that $\C(V_1)$ and $\C(V_2)$ are
isomorphic as fields, then $V_1$ and $V_2$ are birational after a base change. 

\begin{lemma}
\label{lem:field_iso}
	Let $k$ be an algebraically closed field, and let $k(V_1)$ and $k(V_2)$ be the function fields of two varieties $V_{1}$ and $V_{2}$, respectively. Suppose there exists a field isomorphism $\varphi\colon k(V_1) \to k(V_2)$. Then there exists a field automorphism $\tau$ of $k$ and a $k$-automorphism
    $\psi \colon k(V_1^{\tau}) \to k(V_2)$ where $V_1^\tau \to \Spec(k)$ 
    is the pullback of $V_1 \to \Spec(k)$ via the automorphism of $\Spec(k)$ induced by $\tau^{-1}$.
\end{lemma}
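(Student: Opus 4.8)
The plan is to recognize $\varphi$ as a $k$-linear isomorphism twisted by a single field automorphism $\tau$ of $k$, where $\tau$ is read off from the restriction of $\varphi$ to the constant subfields. Write $K \colonequals k(V_2)$ and let $k \subseteq K$ denote its field of constants (the standard copy of the base field, an algebraically closed subfield with $K/k$ finitely generated). First I would restrict $\varphi$ to the constant subfield $k \subseteq k(V_1)$, producing an embedding $\varphi|_k \colon k \hookrightarrow K$ with algebraically closed image $L \colonequals \varphi(k)$. Transporting a finite generating set of $k(V_1)/k$ through $\varphi$ shows that $K$ is finitely generated over $L$ as well. The entire statement then reduces to the single claim $L = k$: once this is known, $\varphi|_k$ is an automorphism of $k$, from which $\tau$ and $\psi$ are manufactured by untwisting against the base change $V_1^\tau$.

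The heart of the matter---and the step I expect to be the main obstacle---is the equality $L = k$. I would isolate it as the field-theoretic lemma: if $E \subseteq K$ is an algebraically closed subfield with $K/E$ finitely generated, then every algebraically closed subfield $F \subseteq K$ satisfies $F \subseteq E$. To prove this, suppose some $a \in F$ lies outside $E$; since $E$ is algebraically closed, $a$ is transcendental over $E$. As $F$ is algebraically closed it contains an $n$-th root of $a$ for every $n$, so $K$ contains $M \colonequals E(a^{1/n} : n \geq 1)$, which is infinite over $E(a)$ because $[E(a^{1/n}):E(a)] = n$ (the polynomial $x^{n}-a$ being irreducible over $E(a)$). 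On the other hand, $M$ is an intermediate field of the finitely generated extension $K/E$, hence is finitely generated over $E$ by the standard fact that subextensions of finitely generated field extensions are finitely generated, and therefore also finitely generated over $E(a)$; being finitely generated and algebraic over $E(a)$, it is in fact finite over $E(a)$. This contradicts the unbounded degrees above, so no such $a$ exists and $F \subseteq E$.

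Applying the lemma with base $E = k$ gives $L \subseteq k$, and applying it with base $E = L$ (legitimate since $K/L$ is finitely generated) gives $k \subseteq L$; hence $L = k$, as required. It remains to build $\psi$. I set $\tau \colonequals (\varphi|_k)^{-1} \in \Aut(k)$, which makes sense now that $\varphi|_k$ is an automorphism of $k$. The base change $V_1^\tau$ carries a natural field isomorphism $\Phi \colon k(V_1^\tau) \xrightarrow{\sim} k(V_1)$ that is $\tau$-semilinear, meaning $\Phi(c \cdot w) = \tau(c)\,\Phi(w)$ for $c \in k$; on an affine chart $V_1 = \Spec A$ one has $V_1^\tau = \Spec(A \otimes_{k,\tau^{-1}} k)$ with $\Phi$ induced by $a \otimes c \mapsto \tau(c)\,a$, from which the semilinearity is a direct check. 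Since $\varphi$ is $(\varphi|_k)$-semilinear and $\varphi|_k = \tau^{-1}$, the composite $\psi \colonequals \varphi \circ \Phi \colon k(V_1^\tau) \to k(V_2)$ satisfies $\psi(c \cdot w) = \tau^{-1}(\tau(c))\,\psi(w) = c\,\psi(w)$, so the two twists cancel and $\psi$ is the desired $k$-isomorphism. (Should the orientation come out reversed, one replaces $\tau$ by $\tau^{-1}$ so that the twists still cancel.)
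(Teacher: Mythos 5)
Your proof is correct, but the key step is handled by a genuinely different argument than the paper's. The crux in both cases is showing that $\varphi$ carries the constant subfield $k \subseteq k(V_1)$ onto the constant subfield $k \subseteq k(V_2)$. The paper does this geometrically: it passes to a normal complete model of $V_1$ and characterizes $k$ inside $k(V_1)$ as the set of infinitely divisible elements $\bigcap_n \{z^n : z \in k(V_1)\}$, using the divisorial valuations $\nu_D$ attached to prime divisors to show that an infinitely divisible element has no zeros or poles and hence is constant. You instead prove the purely field-theoretic statement that if $K/E$ is finitely generated with $E$ algebraically closed, then every algebraically closed subfield of $K$ lies in $E$ (via Eisenstein irreducibility of $x^n - a$ over $E(a)$ for $a$ transcendental, together with the fact that intermediate fields of finitely generated extensions are finitely generated), and then apply this in both directions to conclude $\varphi(k) = k$. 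Your route avoids all geometry (no normalization, completion, or Weil divisors) and works verbatim for any finitely generated field extension; the paper's route trades that generality for a characterization of $k$ (divisibility) that is manifestly preserved by any abstract field isomorphism in a single step. The final untwisting step constructing $\psi$ from $\Phi$ and $\tau = (\varphi|_k)^{\pm 1}$ matches the paper's, up to the orientation convention you already flag.
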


\begin{proof}
	Without loss of generality, by passing to their normalizations we may assume that both $V_{1}$ and $V_{2}$ are normal.
    Furthermore, taking completions, we may assume that $V_1$, $V_2$ are complete.
    Consider the set of (infinitely) divisible elements in $k(V_{1})$:
    \[ \D(V_{1}) \coloneqq \bigcap_{n \geq 1} \{z^n\mid z \in k(V_1)\}. \]
    We claim that $\D(V_{1}) = k$. It is clear that $k$ is contained in $\D(V_{1})$ since $k$ is algebraically closed. For the other direction, suppose $f \in \D(V_{1})$ is a nonzero element, i.e., for every $n \geq 1$ there exists $g_{n} \in k(V_1)$ such that $g_{n}^{n} = f$. Note that every prime (Weil) divisor $D \subset V_{1}$ gives rise to a discrete valuation $\nu_{D}$ on $k(V_{1})$ and
    \[ \nu_{D}(f) = \nu_{D}(g_{n}^{n}) = n \cdot \nu_{D}(g_{n}). \]
    Since $\nu_{D}(f)$ is divisible by $n$ for every $n \geq 1$, this implies that $\nu_{D}(f) = 0$ for all $D$. Thus, $f$ (thought of as a rational function on $V_{1}$) has no zeros or poles anywhere, and hence $f \in k$.
    Finally, repeating the process for $V_{2}$ and noting that $\varphi$ is a field isomorphism, it follows that $\varphi$ sends $\D(V_{1}) = k$ isomorphically to $\D(V_{2}) = k$. If we let $\tau$ denote this automorphism of $k$, then performing the base change of $k(V_{1}) \rightarrow k$ by $\tau^{-1}$ allows us to write the desired map $\psi$ as a composition
    \[ k(V_{1}^{\tau}) = {k(V_{1}) \otimes_{\tau^{-1}} k} \longrightarrow k(V_{1}) \xlongrightarrow{\varphi} k(V_{2}). \qedhere \]
\end{proof}

By the lemma above, in order to prove \Cref{thm:intromain} it suffices to produce an irreducible variety $Z$ such that $X$ is birational to $\mathbb{A}^s \times Z$ and then prove that $\C(Z)$ and $\C(Y)$ are abstractly isomorphic as fields.

We now recall some results about the geometric structure of $\Bir(X)$. An algebraic family of birational transformations of a variety $X$ parametrized by a variety
$V$ is a $V$-birational map $\theta\colon V \times X \to V \times X$ such that $\theta$ induces an isomorphism between open dense subsets of $V \times X$ that both surject to $V$ under the first projection.
Such an algebraic family gives us a map from $V$ to $\Bir(X)$. Maps of this type are called \textit{morphisms}. The \textit{Zariski topology} on $\Bir(X)$ is defined as the finest topology such that all the morphisms are continuous. If $G\subset\Bir(X)$ is a closed subgroup, then its connected component of the identity $G^\circ$ has at most countable index in $G$ (see \cite[Corollary~4.11]{RUvS24a}). A subgroup of $\Bir(X)$ is an \textit{algebraic subgroup} if it is the image of a morphism from an algebraic group that is also a group homomorphism. Algebraic subgroups are always closed and finite dimensional in $\Bir(X)$. On the other hand, every closed and finite dimensional subgroup in $\Bir(X)$ has a unique structure of an algebraic group (see \cite[Corollary~5.12, Theorem~1.3]{RUvS24a} for details and proofs). An element $g\in\Bir(X)$ is \textit{unipotent} if it is contained in a unipotent algebraic subgroup.

Let us also recall the notion of a rational replica (see \cite[Section~6.2]{RUvS24b}). Let $G$ be an algebraic group with a faithful regular action on $X$. Every $G$-invariant rational map $f\colon X\dashrightarrow G$ defines a birational transformation of $\mathrm{dom}(f)$ given by $x\mapsto f(x)\cdot x$. In this way, we obtain a group homomorphism from the group of $G$-invariant rational maps $\mathrm{Rat}^G(X,G)$ to $\Bir(X)$, whose image we denote by $\mathrm{rRepl}_X(G)$. If $G\subset\Bir(X)$ is an algebraic subgroup, we first choose -- using Weil's regularization theorem -- a $G$-equivariant birational map $\varphi\colon X\dashrightarrow X'$, such that the conjugate action of $G$ on $X'$ is regular, and then define $\mathrm{rRepl}(G):=\varphi^{-1}\mathrm{rRepl}_{X'}(G)\varphi$. This construction does not depend on the choice of $\varphi$ or $X'$. A useful property of the subgroup $\mathrm{rRepl}(G)$ is that it coincides with its centralizer in $\Bir(X)$ \cite[Proposition~6.7]{RUvS24b}. Finally, given a dominant morphism $\pi: X \rightarrow Y$, we will use $\Bir(X, \pi)$ to denote the subgroup of elements preserving the fibers of $\pi$ and $\Bir(X/Y)$ to denote the subgroup of elements inducing the identity on $Y$.

The following result shows that group isomorphisms of birational transformation
groups always preserve unipotent elements. 

\begin{lemma}
\label{lem:unip}
Let $V_1$ and $V_2$ be varieties, and suppose there is a group isomorphism
$\Xi: \Bir(V_1) \rightarrow \Bir(V_2)$. 
If $u \in \Bir(V_1)$ is a unipotent element, then
$u' \coloneqq \Xi(u)$ is unipotent.
\end{lemma}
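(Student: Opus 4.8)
The plan is to isolate a property of a group element that is (i) phrased entirely in terms of the abstract group structure of $\Bir(V_1)$, so that it is automatically preserved by the isomorphism $\Xi$, and (ii) satisfied by $g$ if and only if $g$ is unipotent. Once such a characterization is available the lemma is immediate: the property transfers from $u$ to $u' = \Xi(u)$, forcing $u'$ to be unipotent. The point of the rational replica machinery recalled above, together with \cite[Section~6]{RUvS24b}, is precisely to manufacture such a group-theoretic characterization.

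Concretely, I would first attach to the unipotent element $u$ the one-parameter unipotent subgroup $U \subseteq \Bir(V_1)$ that it generates; in characteristic zero $U \cong \G_a$ and $u \in U$. After regularizing the $U$-action, I would pass to the rational replica $R := \rRepl(U)$ and record three purely group-theoretic features of $R$. First, $R$ is abelian, since the replica of a commutative algebraic group is commutative. Second, $R$ equals its own centralizer in $\Bir(V_1)$ by \cite[Proposition~6.7]{RUvS24b}; thus the condition ``$A = C_{\Bir(V_1)}(A)$'' singles out replicas among abelian subgroups. Third, and crucially, $R$ is torsion-free and divisible: a faithful $\G_a$-action has generically trivial stabilizer (as $\G_a$ has no nontrivial finite subgroups in characteristic zero), so $R$ is isomorphic to the additive group of $U$-invariant rational maps valued in $\G_a \cong (\C,+)$, a $\C$-vector space. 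Since the constant invariant maps recover the original $U$, we have $u \in U \subseteq R$.

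This suggests the candidate characterization: \emph{$g$ is unipotent if and only if $g$ lies in some abelian subgroup $A \subseteq \Bir(V_1)$ with $A = C_{\Bir(V_1)}(A)$ that is torsion-free and divisible.} The forward implication is the computation of the previous paragraph. The reverse implication hinges on separating the unipotent case from the semisimple one: the analogous replica of a torus $\G_m$ is the \emph{multiplicative} group of invariant rational functions, which contains all roots of unity $\mu_\infty$ and is therefore not torsion-free, and the same torsion persists whenever a nontrivial semisimple part is present. Thus torsion-freeness of a self-centralizing abelian subgroup is exactly the condition that excludes semisimple (and mixed) behavior and forces membership in a unipotent replica, hence unipotence.

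The main obstacle is the reverse implication, namely verifying that \emph{no} non-unipotent element can satisfy these conditions. This requires the structural results of \cite[Section~6]{RUvS24b} to guarantee that a self-centralizing abelian subgroup of $\Bir(V_1)$ is in fact governed by a rational replica, and that torsion-freeness of that replica forces the underlying algebraic group to be unipotent; equivalently, that a non-torsion semisimple element always manifests torsion inside every self-centralizing abelian subgroup containing it. Making this precise --- in particular controlling elements that are a priori neither unipotent nor semisimple, and abelian subgroups that are not a priori algebraic --- is where the genuine work lies, and is the step I expect to occupy most of the proof.
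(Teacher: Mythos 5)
Your forward direction is fine: $\rRepl(U) \cong (\C(V_1)^U,+)$ is abelian, self-centralizing by \cite[Proposition~6.7]{RUvS24b}, torsion-free and divisible, and contains $u$. The gap is exactly where you locate it, but it is not a technicality that the cited results will fill: the reverse implication of your proposed characterization is not established by anything in \cite[Section~6]{RUvS24b}, and it is doubtful as stated. A self-centralizing abelian subgroup $A \subseteq \Bir(V_2)$ is closed (centralizers are always closed), but it need not be finite-dimensional, algebraic, or ``governed by a rational replica'' in any sense; for an arbitrary variety $V_2$, the group $\Bir(V_2)$ contains elements lying in no positive-dimensional algebraic subgroup at all (e.g.\ elements of dynamical degree greater than one on surfaces), and the maximal abelian subgroups containing them are not controlled by the replica machinery. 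Your argument that torsion-freeness excludes semisimple behavior only addresses elements of algebraic subgroups, where a nontrivial semisimple part produces roots of unity inside a torus replica; it says nothing about a non-algebraic $g$ sitting inside a torsion-free, divisible, self-centralizing abelian subgroup. So the equivalence you want to transport along $\Xi$ would itself require a proof at least as hard as the lemma, and you have not supplied it.

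The paper circumvents this with a different transferable invariant. It places $u$ inside $\rRepl(\mathbb{G}_{\mathrm{a}})$ for the translation action on $\A^1 \times W_1$, normalized by $\rRepl(\mathbb{G}_{\mathrm{m}})$ acting by conjugation with exactly two orbits; both replicas are self-centralizing, so their images under $\Xi$ are closed with identity components of countable index, whence $\Xi(\rRepl(\mathbb{G}_{\mathrm{m}}))^{\circ}$ acts on $\Xi(\rRepl(\mathbb{G}_{\mathrm{a}}))^{\circ}$ with countably many orbits. The key input is then \cite[Theorem~6.1]{RUvS24b}, which produces a nontrivial unipotent element of $\Bir(V_2)$ commuting with $\Xi(\rRepl(\mathbb{G}_{\mathrm{a}}))^{\circ}$; a density argument (countable-index subgroups of the replica have the same centralizer as the whole replica) forces that unipotent element into $\Xi(\rRepl(\mathbb{G}_{\mathrm{a}}))$, and the two-orbit conjugation action makes every nontrivial element of that group, in particular $\Xi(u)$, conjugate to it. That existence theorem is the engine of the proof, and your proposal has no substitute for it.
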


\begin{proof}
Up to birational equivalence, we may replace $V_1$ by 
$\mathbb{A}^1 \times W_1$, where $u$ now acts on $\mathbb{A}^1$ by 
translation and on $W_1$ by the identity.  Denote by 
$U \subset \Bir(\A^1 \times W_1)$ the algebraic group of transformations
$(p,w) \mapsto (p + c,w)$ for $c \in \mathbb{G}_{\on{a}}$, which 
contains $u$, and $D \subset \Bir(\A^1 \times W_1)$ the algebraic group
of transformations $(p,w) \mapsto (ap,w)$ for $a \in \mathbb{G}_{\on{m}}$.
Then $\rRepl(D)$ acts on $\rRepl(U)$ by conjugation with two orbits
(one is the identity and the other consists of all non-trivial elements), and
both these groups are self-centralizing.  The images of these two groups
are self-centralizing, hence closed, in $\Bir(V_2)$, so the connected
components of the identity have countable index.  Thus,
$\Xi(\rRepl(D))^{\circ}$ acts on $\Xi(\rRepl(U))^{\circ}$ with countably
many orbits.  Therefore, \cite[Theorem 6.1]{RUvS24b} shows that there
exists a unipotent element different from the identity in $\Bir(V_2)$
that commutes with $\Xi(\rRepl(U))^{\circ}$.

Now, let $H \subset \rRepl(U)$ be a countable index subgroup. 
By~\cite[Lemma~5.5]{RUvS24b}, $H$ is dense in $\rRepl(U)$,
so
$\mathrm{Cent}_{\Bir(\A^1 \times W_1)}(H) 
 = \mathrm{Cent}_{\Bir(\A^1 \times W_1)}(\rRepl(U)) = \rRepl(U)$. 
Since there is a unipotent element in $\Bir(V_2)$ commuting with 
$\Xi(\rRepl(U))^{\circ}$, the above argument shows it commutes with
all of $\Xi(\rRepl(U))$ and hence belongs to $\Xi(\rRepl(U))$.
We already saw that $\rRepl(D)$ acts transitively by conjugation on the 
set of non-trivial elements of $\rRepl(U)$, so all non-trivial elements in
$\Xi(\rRepl(U))$ are also conjugate.  This means that every element
of $\Xi(\rRepl(U))$ is unipotent.  In particular, $u' = \Xi(u)$ is 
unipotent.
\end{proof}

\subsection{Proof of \Cref{thm:intromain}}

Building off of \Cref{lem:unip}, we will ultimately show as part of 
the proof of \Cref{thm:intromain}
that 
a group of translations on $\mathbb{A}^s \times Y$ must map to a group
of translations on $X$ of the same dimension under 
$\Xi: \Bir(\mathbb{A}^s \times Y) \cong \Bir(X)$.

For brevity, we write $\mathrm{Tr}_s(\C(Y))$ for the rational replica
of the algebraic group given by elements of $\Bir(\mathbb{A}^s \times Y)$ that
act by translations on $\mathbb{A}^s$ and the identity on $Y$.  We adopt a similar
convention for rational replicas of other algebraic groups acting on $\mathbb{A}^s$,
e.g., $\mathrm{Aff}_s$ and $\SL_s$.
We can and will assume that $s$ is maximal, meaning that
$\mathbb{A}^s \times Y$ is not birational to any other variety of the form 
$\mathbb{A}^{s'} \times Y'$ with $s' > s$.  One consequence of this is that
$\Bir(Y)$ does not
contain any nontrivial unipotent element.

We will need two preliminary lemmas.  The first deals with the normalizer
of the translation group in the birational transformation group of a variety
of the form $\mathbb{A}^s \times V$.

\begin{lemma}
\label{lem:normalizer}
Let $V$ be a variety and
$\Tr_s(\C(V)) \subset \Bir(\A^s \times V)$ the subgroup of rational 
families of translations of $\A^s \times V$ over $V$.  Then the normalizer
of $\Tr_s(\C(V))$ in $\Bir(\A^s\times V)$ is equal to $\Aff_s(\C(V))\rtimes \Bir(V)$,
where $\Aff_s(\C(V))$ denotes the group of affine transformations over the function field $\C(V)$. 
\end{lemma}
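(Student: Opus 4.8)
The plan is to prove the equality by establishing each inclusion, with the containment of the normalizer inside $\Aff_s(\C(V))\rtimes\Bir(V)$ being the substantial direction. For the straightforward inclusion, I would check by direct computation that both factors normalize $\Tr_s(\C(V))$: conjugating the translation by $t\in\C(V)^s$ by an affine map $(x,v)\mapsto(A(v)x+b(v),v)$ yields the translation by $A(v)t$, while conjugating by $\beta_\phi\colon(x,v)\mapsto(x,\phi(v))$ with $\phi\in\Bir(V)$ yields the translation by $t\circ\phi^{-1}$, so both land back in $\Tr_s(\C(V))$. The same kind of computation shows that $\Bir(V)$ normalizes $\Aff_s(\C(V))$ (acting by base change through $\phi$) and that $\Aff_s(\C(V))\cap\Bir(V)=\{1\}$, which identifies the product as the asserted semidirect product.

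For the reverse inclusion, the central point is that $\Tr_s(\C(V))$ algebraically reconstructs the projection $\pi\colon\A^s\times V\to V$. On function fields $\C(\A^s\times V)=\C(V)(x_1,\dots,x_s)$, the subgroup $\Tr_s(\C(V))$ acts by $x_i\mapsto x_i+t_i$ and fixes $\C(V)$, and I would verify that its fixed field is exactly $\C(V)$, since a rational function invariant under all translations by $\C^s$ is constant along the generic fibre. Now if $g\in\Bir(\A^s\times V)$ normalizes $\Tr_s(\C(V))$, then for $f\in\C(V)$ and $\tau\in\Tr_s(\C(V))$ we have $\tau^*(g^*f)=g^*\big((g\tau g^{-1})^*f\big)=g^*f$, because $g\tau g^{-1}\in\Tr_s(\C(V))$ fixes $f$; hence $g^*$ preserves the fixed field $\C(V)$ and therefore descends to some $\phi\in\Bir(V)$ with $\pi\circ g=\phi\circ\pi$. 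Replacing $g$ by $\beta_\phi^{-1}g$ (which still normalizes $\Tr_s(\C(V))$) reduces us to the case where $g$ induces the identity on $V$, i.e. $g\in\Bir(\A^s\times V/V)$, a birational self-map of $\A^s$ over the field $K=\C(V)$.

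It remains to show that such a $g$, written $x\mapsto F(x)$ with $F_i\in K(x_1,\dots,x_s)$, is affine once it normalizes the translations $K^s$. The normalizing condition forces $F(x+t)-F(x)$ to be independent of $x$ for every $t\in K^s$; differentiating in $x$ shows that each partial derivative $\partial F_i/\partial x_j$ is invariant under all $K^s$-translations, hence lies in $K$ (as $K$ is infinite and the characteristic is zero). Thus each $F_i$ is affine-linear over $K$, and since $g$ is birational its linear part lies in $\GL_s(K)$, giving $g\in\Aff_s(K)=\Aff_s(\C(V))$.

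I expect the descent step to be the main obstacle: the delicate part is arguing rigorously that normalizing $\Tr_s(\C(V))$ forces $g$ to respect the fibration $\pi$. Handling this through the fixed-field computation above, rather than through a pointwise analysis of orbits, is what makes the argument clean, since it sidesteps any bookkeeping of the domains of definition of the rational maps involved and directly produces the base map $\phi$.
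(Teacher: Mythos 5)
Your proposal is correct, and its skeleton matches the paper's: first show that any normalizing element preserves the fibration $\pi\colon \A^s\times V\to V$ and hence induces an element of $\Bir(V)$, then identify the fiber-preserving part of the normalizer with $\Aff_s(\C(V))$. The differences are in how each step is executed. For the descent, the paper argues that a normalizing element permutes the $\Tr_s(\C(V))$-orbits (which are the generic fibers of $\pi$); your fixed-field computation $\tau^*(g^*f)=g^*((g\tau g^{-1})^*f)=g^*f$ is an equivalent but cleaner formulation that, as you note, avoids any discussion of domains of definition. For the second step, the paper simply cites \cite[Lemma 6.11]{RUvS24b} for the fact that the fiber-preserving normalizing elements are exactly $\Aff_s(\C(V))$, whereas you prove it from scratch: the identity $F(x+t)-F(x)=t'$ for all $t\in K^s$, differentiated in $x$, makes every $\partial F_i/\partial x_j$ translation-invariant and hence constant in $K=\C(V)$ (using that $K^s$ is Zariski dense and the characteristic is zero), so $F$ is affine-linear with invertible linear part. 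This self-contained argument is a genuine addition; its only cost is that it leans on characteristic zero (a rational function with vanishing partials is constant), which is harmless here since the paper works over $\C$ throughout. One small point worth making explicit in your write-up is that after replacing $g$ by $\beta_\phi^{-1}g$ the new element still normalizes $\Tr_s(\C(V))$, which follows from your easy-inclusion computation; you use this implicitly.
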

	
\begin{proof}
Let $f\in \mathrm{Norm}_{\Bir(\A^s\times V)}(\Tr_s(\C(V)))$. 
Then $f$ permutes the orbits of $\Tr_s(\C(V))$, hence it preserves
the fibration given by the projection to $V$, so it induces a rational
action on $V$. The group of elements in $\mathrm{Norm}(\Tr_s(\C(V)))$ 
that act on $V$ as the identity is exactly $\mathrm{Aff}_s(\C(V))$ (see 
\cite[Lemma 6.11]{RUvS24b}).
The claim follows.
\end{proof}

Next, we examine the structure of the subgroup generated by unipotent elements
inside this normalizer.

\begin{lemma}
\label{lem:unip_in_norm}
Let $V$ be a variety and 
$\Aff_s(\C(V))\rtimes \Bir(V) \subset \Bir(\A^s \times V)$ the normalizer
from \Cref{lem:normalizer}.  Then the 
subgroup $G$ generated by unipotent elements in this normalizer contains
$\SL_s(\C(V)) \ltimes \Tr_s(\C(V)) \subset \Aff_s(\C(V))$. 
In particular, if $\Bir(V)$ contains
no unipotent elements, then $G$ equals $\SL_s(\C(V)) \ltimes \Tr_s(\C(V))$.
\end{lemma}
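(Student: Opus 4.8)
The plan is to establish the two inclusions separately, writing $K \coloneqq \C(V)$ and $N \coloneqq \Aff_s(K)\rtimes\Bir(V)$ for the normalizer.

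For the containment $\SL_s(K)\ltimes\Tr_s(K)\subseteq G$, first I would exhibit generators of the right-hand side as unipotent elements. Every translation in $\Tr_s(K)$ sits inside the one-parameter subgroup $t\mapsto\bigl((x,v)\mapsto(x+t\,b(v),v)\bigr)$ over $\G_{\on{a}}$, so it is unipotent. Likewise each elementary transvection $I+f\,E_{ij}$ with $i\neq j$ and $f\in K$ sits inside the $\G_{\on{a}}$-family $\lambda\mapsto I+\lambda f\,E_{ij}$ and is unipotent. Since $\SL_s(K)$ is generated by transvections over the field $K$ (Gaussian elimination), $G$ contains $\SL_s(K)$ and $\Tr_s(K)$, and hence their product.

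For the reverse inclusion, under the hypothesis that $\Bir(V)$ has no nontrivial unipotent element, I would detect $\SL_s(K)\ltimes\Tr_s(K)$ as the common kernel of two homomorphisms: the projection $\pi\colon N\to\Bir(V)$ recording the induced action on the base (with kernel $\Aff_s(K)$), and the relative Jacobian determinant $\mu\colon\Aff_s(K)\to K^\times$ sending $(x,v)\mapsto(A(v)x+b(v),v)$ to $\det A$, which is a homomorphism by the chain rule and has kernel exactly $\SL_s(K)\ltimes\Tr_s(K)$. It then suffices to show every unipotent $g\in N$ lies in $\ker\pi\cap\ker\mu$. In characteristic zero $g$ lies in a one-parameter unipotent subgroup, i.e.\ there is a morphism and homomorphism $\eta\colon\G_{\on{a}}\to\Bir(\A^s\times V)$ with $\eta(1)=g$. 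Using that $\Tr_s(K)$ is self-centralizing, hence closed, one checks $N$ is closed; then $\eta^{-1}(N)$ is a closed subgroup of $\G_{\on{a}}$ containing $1$, so it is all of $\G_{\on{a}}$ and $\eta$ factors through $N$. Consequently $\pi\circ\eta\colon\G_{\on{a}}\to\Bir(V)$ is a morphism of groups with unipotent algebraic image, which is trivial by hypothesis; thus $\pi(g)=\id$, and moreover $\eta(t)\in\Aff_s(K)$ for all $t$.

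Finally, $\mu\circ\eta\colon\G_{\on{a}}\to K^\times$ is a homomorphism, and because $\eta$ is a morphism out of $\G_{\on{a}}$ the entries of $A_{\eta(t)}$ are polynomial in $t$, so $\chi(t)\coloneqq\mu(\eta(t))\in K[t]$. Comparing top-degree coefficients in $\chi(t)\,\chi(t')=\chi(t+t')$ forces $\chi\equiv 1$, whence $\mu(g)=1$ and $g\in\SL_s(K)\ltimes\Tr_s(K)$. As $G$ is generated by unipotent elements, this gives $G\subseteq\SL_s(K)\ltimes\Tr_s(K)$, hence equality. I expect the main obstacle to be exactly this reverse inclusion — transporting unipotence through $\pi$ and $\mu$ — which relies on the characteristic-zero one-parameter-subgroup structure of unipotent elements together with the closedness of $N$, guaranteeing that $\eta(\G_{\on{a}})$ does not escape the normalizer; the forward containment is routine.
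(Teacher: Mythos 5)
Your proposal is correct and follows essentially the same route as the paper: exhibit the generators of $\SL_s(\C(V))\ltimes\Tr_s(\C(V))$ as unipotents, show that any unipotent in the normalizer induces a unipotent (hence trivial) $\G_{\on{a}}$-action on $V$, and show that unipotents landing in $\Aff_s(\C(V))$ have relative determinant $1$ --- where the paper cites \cite[Proposition~4.2]{RUvS24b} for this last point you give a direct one-parameter-subgroup argument, and your check that the one-parameter subgroup through $g$ stays inside the closed normalizer is a detail the paper's proof passes over. Two small points to tighten: the entries of $A_{\eta(t)}$ are a priori rational rather than polynomial in $t$ (the triviality of homomorphisms $\G_{\on{a}}\to\G_{\on{m}}$ over $\C(V)$ still goes through), and the assertion that $\pi\circ\eta$ is again a morphism into $\Bir(V)$ is exactly the nontrivial fact the paper imports as \cite[Proposition~7.5]{RUvS24a}, namely that $\Bir(\A^s\times V,\pi_2)\to\Bir(V)$ preserves algebraic families.
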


\begin{proof}
We first prove that $\SL_s(\C(V)) \ltimes \Tr_s(\C(V))$ is the
subgroup generated by unipotent elements inside $\Aff_s(\C(V))$. 
But in light of \cite[Proposition 4.2]{RUvS24b}, the unipotents in 
$\mathrm{Aff}_s(\C(V)) \subset \Bir(\mathbb{A}^s \times V)$ are exactly those
elements which are unipotent in $\mathrm{Aff}_s(\C(V))$ considered as a linear
algebraic group over $\C(V)$.  Then the claim follows from the well-known
fact that $\SL_s(\C(V))$ is generated by matrices of the form $I + a E_{i,j}$,
with $i \neq j$, plus the observation that all unipotent matrices must have 
determinant $1$.

The first statement about the subgroup generated by unipotent elements in the
larger group $\Aff_s(\C(V))\rtimes \Bir(V)$ is now clear.  If, furthermore,
$\Bir(V)$ contains no nontrivial unipotent elements, then any unipotent $u$ in the 
semidirect product must map to the identity in $\Bir(V)$.

Indeed, we claim that the image of $u$ in $\Bir(V)$ is unipotent.  This follows
from the fact that the homomorphism $\Bir(\A^s \times V,\pi_2) \rightarrow \Bir(V)$
preserves algebraic families parametrized by normal varieties
\cite[Proposition 7.5]{RUvS24a}, so that the $\mathbb{G}_{\on{a}}$-action induced
by $u$ descends to a $\mathbb{G}_{\on{a}}$-action on $V$ generated by the image of $u$.
Therefore, the image of $u$ must be trivial.  This completes the proof.
\end{proof}

Now we are ready to prove the main theorem.

\begin{proof}[Proof of \Cref{thm:intromain}]

We first note that $\mathrm{Tr}_s(\C(Y))$ is a maximal commutative subgroup of
unipotent elements in $\Bir(\mathbb{A}^s \times Y)$.  That is, among the class
of commutative subgroups consisting only of unipotents, it is a maximal
element under inclusion.  This follows from \cite[Proposition 6.7]{RUvS24b},
which shows that $\mathrm{Tr}_s(\C(Y))$ is self-centralizing: any element that
commutes with all elements in $\mathrm{Tr}_s(\C(Y))$ must already be in
$\mathrm{Tr}_s(\C(Y))$.

By \Cref{lem:unip}, the image $\Xi(\mathrm{Tr}_s(\C(Y)))$
is also a maximal commutative subgroup of unipotent elements in $\Bir(X)$.
A finite collection of commuting unipotent elements generates an algebraic 
subgroup, so we may choose an algebraic subgroup 
$U \subset \Xi(\mathrm{Tr}_s(\C(Y)))$ with 
$\C(X)^U = \C(X)^{\Xi(\mathrm{Tr}_s(\C(Y)))}$; moreover, we can arrange so that
the dimension of the general orbit of $U$ equals $\dim(U)$ \cite[Lemma 6.12]{RUvS24b}. By \cite[Theorem 2]{Brion21}, there
exists some irreducible variety $Z$ such that $X \bir \mathbb{A}^r \times Z$,
where $r = \dim(U)$,
and where \\ $\Xi(\mathrm{Tr}_s(\C(Y))) = \mathrm{Tr}_r(\C(Z))$.
Using \Cref{lem:normalizer}, we have that $\Xi$ induces an isomorphism of normalizers
\[ \mathrm{Aff}_s(\C(Y)) \rtimes \Bir(Y) \xrightarrow{\Xi}
\mathrm{Aff}_r(\C(Z)) \rtimes \Bir(Z). \]
By assumption, $s$ is maximal so \Cref{lem:unip_in_norm} shows that the subgroup generated by
unipotent elements in $\mathrm{Aff}_s(\C(Y)) \rtimes \Bir(Y)$
is the subgroup $\mathrm{SL}_s(\C(Y)) \ltimes \Tr_s(\C(Y)) \subset \mathrm{Aff}_s(\C(Y))$. 

Let $G \subset \mathrm{Aff}_r(\C(Z)) \rtimes \Bir(Z)$ be the subgroup generated
by unipotent elements.  A priori, it could be that $\Bir(Z)$ contains
unipotents, so that $G$ is larger than
$\mathrm{SL}_r(\C(Z)) \ltimes \Tr_r(\C(Z))$.  We will show that this in fact does not occur.
Since $\Xi$ preserves unipotent elements by 
\Cref{lem:unip}, it must preserve the subgroup 
generated by unipotents in the normalizer of the translation group.  Hence
we have that $\Xi$ restricts to
an isomorphism
\[ \mathrm{SL}_s(\C(Y)) \ltimes \Tr_s(\C(Y)) \xrightarrow{\Xi} G. \]
Because $\Xi(\Tr_s(\C(Y))) = \Tr_r(\C(Z))$ and these are normal subgroups
of the left and right sides, respectively, the isomorphism descends to quotients:
\begin{equation}
\label{eq:unip_iso}
    \mathrm{SL}_s(\C(Y)) \xrightarrow{\cong} G/\Tr_r(\C(Z)).
\end{equation}

\noindent \textbf{Case 1}: $s > 1$

Since $s > 1$, the group $\mathrm{SL}_s(\C(Y))$ on the left in \eqref{eq:unip_iso}
is almost simple; in particular, the only proper normal
subgroups are finite and central in $\mathrm{SL}_s(\C(Y))$ \cite[p. 168]{Humphreys}.
Therefore, this property holds for the group
on the right as well.  Denote by $\Bir(Z)_u$ the subgroup generated by all unipotent elements of 
$\Bir(Z)$. By construction, $\Bir(Z)_u$ is contained in $G$ and hence 
the identity on $\Bir(Z)_u$ splits into
\[ \Bir(Z)_u \rightarrow G/\Tr_r(\C(Z)) \rightarrow \Bir(Z)_u \ . \]
Suppose for contradiction that $\Bir(Z)_u$ is non-trivial. Then the kernel $K$ of
\[ G/\Tr_r(\C(Z)) \rightarrow \Bir(Z)_u \]
is a proper normal subgroup, hence it is finite and central. If $r > 1$, then $K$ contains a nontrivial special linear group and must be infinite, a contradiction.  If $r = 1$, then $K$ is trivial and we have that $G/\Tr_1(\C(Z)) \rightarrow \Bir(Z)_u$ is an isomorphism.
The
group $\mathrm{SL}_s(\C(Y)) \ltimes \Tr_s(\C(Y))$ acts by conjugation
on $\Tr_s(\C(Y))$ with two orbits. 
Hence, this is satisfied as well for the conjugation action of $G$ on $\Tr_1(\C(Z))$. This is a contradiction, since 
the subgroup of ``constant" translations $\C^+ \subset \C(Z)^+ = \Tr_1(\C(Z))$ is pointwise fixed under the action by
$G = \Tr_1(\C(Z)) \rtimes \Bir(Z)_u$. 

This argument shows that $\Bir(Z)$ contains no nontrivial unipotents.
In particular, $G = \SL_r(\C(Z)) \ltimes \Tr_r(\C(Z))$. Also we have $r > 1$, because 
otherwise $G$ is abelian.

In summary, we must have that both $r,s > 1$ and
$G = \mathrm{SL}_r(\C(Z)) \ltimes \Tr_r(\C(Z))$,
so that the isomorphism \eqref{eq:unip_iso} is between
$\mathrm{SL}_s(\C(Y))$ and $\mathrm{SL}_r(\C(Z))$.
Then \cite[Theorem 5.6.10]{O'Meara} implies that $r = s$ and the fields
$\C(Y)$ and $\C(Z)$ are isomorphic, completing the proof of the main theorem
in this case.

\noindent \textbf{Case 2}: $s = 1$

In this case, we still have that the translation group $\C(Y)^+ \coloneqq \Tr_1(\C(Y))$
maps to a translation group $\Tr_r(\C(Z))$. 
But then we must have $r = 1$, or else
the proof in Case $1$, applied in reverse, would show that 
$\mathbb{A}^1 \times Y$ actually splits off some $\mathbb{A}^r$ with $r > 1$,
contradicting the maximality of $s$.  Hence we have a birational equivalence
$X \bir \mathbb{A}^1 \times Z$, the map $\Xi$ sends $\C(Y)^+$ to 
$\C(Z)^+$, and moreover $\Bir(Z)$ does not
contain any unipotents by the same reasoning as above.  However, since
$\mathrm{SL}_1$ is trivial, we must employ a different strategy to obtain the
isomorphism of function fields.

We fix some notation. Let $V$ be an irreducible variety. 
Inside $\mathrm{PGL}_2(\C(V)) = \Bir(\A^1 \times V/V)$ we identify  $\Aff_1(\C(V)) = \Bir(\A^1 \times V/V)$ with the subgroup of upper triangular matrices, 
$\C(V)^+$ with the subgroup of strictly upper triangular matrices 
and $\C(V)^*$ with the subgroup of diagonal matrices. Moreover, we consider the following involution:
\begin{equation}\label{eq:winvolution}
    w \coloneqq \begin{pmatrix} 0 & 1 \\ 1 & 0 \end{pmatrix}
    \in \mathrm{PGL}_2(\C(V))
    \, . 
\end{equation}
\noindent For the proof, we will use that $\mathrm{PGL}_2(\C(V))$ is generated by $\Aff_1(\C(V))$ and
$w$. Moreover, we will use that the normalizer of $\C(V)^\ast$ in 
$\mathrm{PGL}_2(\C(V))$ is generated by $w$ modulo elements from $\C(V)^\ast$.

The intermediate goal is to show that 
$\Xi(\mathrm{PGL}_2(\C(Y))) = \mathrm{PGL}_2(\C(Z))$. In order to achieve this, 
we require the following lemma.

\begin{lemma}
\label{lem:doubling_action}
Let $V$ be a variety and $G \cong \C(V)^+$ the subgroup
of translations inside 
$\Bir(\A^1 \times V)$.  Suppose that $\alpha$ is an element of $\Bir(\A^{1}\times V)$
with the property that $\alpha g \alpha^{-1} = g^2$ for all $g \in G$.  Then, up to a translation, 
$$\alpha = \begin{pmatrix}
    2 & 0 \\ 0 & 1
\end{pmatrix} \in \mathrm{PGL}_2(\C(V)).$$
\end{lemma}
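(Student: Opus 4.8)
The plan is to first observe that $\alpha$ must normalize the translation group $G = \Tr_1(\C(V))$, and then to pin down its exact form by a direct conjugation computation inside the explicit normalizer supplied by \Cref{lem:normalizer}. The key structural input is that conjugation by $\alpha$ sends $G$ into itself: since $G \cong (\C(V),+)$ and we are in characteristic zero, the squaring map $g \mapsto g^2$ corresponds to $c \mapsto 2c$, which is a \emph{bijection} of $G$. Hence $\alpha G \alpha^{-1} = \{\,g^2 : g \in G\,\} = G$, so $\alpha \in \mathrm{Norm}_{\Bir(\A^1 \times V)}(\Tr_1(\C(V)))$. By \Cref{lem:normalizer} with $s = 1$, this normalizer equals $\Aff_1(\C(V)) \rtimes \Bir(V)$, so I may write $\alpha$ in the explicit form $(z,v) \mapsto (a(v)\,z + b(v),\, \beta(v))$ for some $a \in \C(V)^{*}$, $b \in \C(V)$, and $\beta \in \Bir(V)$.

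Next I would compute the conjugate $\alpha g \alpha^{-1}$ for a translation $g\colon (z,v) \mapsto (z + c(v), v)$, with $c \in \C(V)$. A direct calculation shows that $\alpha g \alpha^{-1}$ is again a translation, namely the one translating by the function $u \mapsto a(\beta^{-1}(u))\,c(\beta^{-1}(u))$. Writing $\phi$ for the field automorphism of $\C(V)$ given by pullback along $\beta^{-1}$ (so $\phi(f) = f \circ \beta^{-1}$), this says $\alpha g \alpha^{-1}$ is the translation by $\phi(ac) = \phi(a)\,\phi(c)$.

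Imposing the hypothesis $\alpha g \alpha^{-1} = g^2$ for every $g$ then amounts to the scalar identity $\phi(a)\,\phi(c) = 2c$ for all $c \in \C(V)$. Testing $c = 1$ forces $\phi(a) = 2$; substituting back gives $\phi(c) = c$ for every $c$, so $\phi = \id$. Since $\Bir(V)$ acts faithfully on $\C(V)$, this yields $\beta = \id$, and then $a = \phi(a) = 2$ while $b$ remains arbitrary. Thus $\alpha\colon (z,v) \mapsto (2z + b(v), v)$, which in $\mathrm{PGL}_2(\C(V))$ is $\begin{pmatrix} 2 & b \\ 0 & 1 \end{pmatrix} = \begin{pmatrix} 1 & b \\ 0 & 1 \end{pmatrix}\begin{pmatrix} 2 & 0 \\ 0 & 1 \end{pmatrix}$, i.e. the matrix $\begin{pmatrix} 2 & 0 \\ 0 & 1 \end{pmatrix}$ composed with the translation by $b$. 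This is exactly the asserted conclusion that, up to a translation, $\alpha = \begin{pmatrix} 2 & 0 \\ 0 & 1 \end{pmatrix}$.

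The computation itself is short and essentially formal, so the one place I would be most careful is the bookkeeping in the conjugation step: correctly tracking how the $\beta$-twist acts on the coefficient functions $a$ and $c$, and then correctly extracting \emph{both} conclusions $\beta = \id$ and $a = 2$ from the single identity $\phi(a)\,\phi(c) = 2c$. The conceptual point worth emphasizing is that the hypothesis is rigid enough to rule out any nontrivial action on the base $V$ a priori: one might expect $\alpha$ to be allowed a nonconstant scaling factor $a$ twisted by some $\beta \in \Bir(V)$, but the doubling relation forces $a$ to be the constant $2$ and $\beta$ to be trivial.
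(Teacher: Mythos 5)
Your proof is correct and follows essentially the same route as the paper's: reduce to the normalizer $\Aff_1(\C(V)) \rtimes \Bir(V)$ via \Cref{lem:normalizer}, observe that the conjugation action on $G$ ignores the translation part, and then test the doubling relation against translations to force the scaling factor to be the constant $2$ and the $\Bir(V)$-component to be trivial. The only difference is packaging — the paper extracts the two conclusions separately (constants for $\alpha_1$, a function separating $x$ from $\alpha_2(x)$ for $\alpha_2=\id$), whereas you read both off the single identity $\phi(a)\phi(c)=2c$ — but the substance is identical.
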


\begin{proof}
Since conjugation by $\alpha$ sends $G$ to itself by assumption, 
we have that
\[ \alpha \in \mathrm{Norm}_{\Bir(\A^1 \times V)}(G) = 
(\C(V)^* \ltimes \C(V)^+) \rtimes \Bir(V). \]
The action of $\alpha$ on the translation group $G = \C(V)^+$ depends only on the image of $\alpha$ in the quotient
$\C(V)^* \rtimes \Bir(V)$, so after composing with a translation we may assume that $\alpha = \alpha_1 \alpha_2$, where $\alpha_1 \in \C(V)^*$ and $\alpha_2 \in \Bir(V)$.

First consider a nontrivial constant translation $g \in \C^+ \subset \C(V)^+$, which shifts all fibers of $\mathbb{A}^1 \times V \rightarrow V$ by $g$. Conjugation by elements of $\Bir(V)$ is trivial on this element so
$$\alpha g \alpha^{-1} = \alpha_1 \alpha_2 g \alpha_2^{-1} \alpha_1^{-1}
 = \alpha_1 g \alpha_1^{-1},$$
 which must equal $g^2$ (i.e., a shift by $2g \in \C^+$).  The only element
$\alpha_1 \in \C(V)^*$ that accomplishes this is
$$\alpha_1 = \begin{pmatrix}
    2 & 0 \\ 0 & 1
\end{pmatrix} \in \mathrm{PGL}_2(\C(V)),$$
where once again we have identified the affine transformations as a subset of the
projective transformations $\mathrm{PGL}_2(\C(V))$.

It remains to see that $\alpha_2 = \id$.  Suppose to the contrary that 
$\alpha_2 \neq \id$ and choose $x \in V$ in the domain of $\alpha_2$ with
$\alpha_2(x) = x' \neq x$.  Choose an element $g \in \C(V)^+$ corresponding
to a rational function $g \colon V \dashrightarrow \A^1$
containing both $x,x'$ in its domain
and with the property that $g(x) \neq g(x')$.  Then we have for 
$t \in \mathbb{A}^1$ that
\begin{align*}
(\alpha g \alpha^{-1})(t,x') & = 
\alpha_1 \alpha_2 g \alpha_2^{-1} \alpha_1^{-1}(t,x') =
\alpha_1 \alpha_2 g \alpha_2^{-1}(t/2,x') = \alpha_1 \alpha_2 g (t/2,x) \\
& = \alpha_1 \alpha_2 (t/2 + g(x),x) = \alpha_1 (t/2 + g(x),x') = (t + 2g(x),x').
\end{align*}
But this means that $\alpha g \alpha^{-1}$ acts by translation by $2g(x)$
on the $x'$ fiber, which is not equal to $2g(x')$, a contradiction.  Hence
we have shown that $\alpha = \alpha_1$ has the required form.
\end{proof}

The key takeaway from this lemma is that the action of $\alpha$
on the translation group by conjugation determines it uniquely (up to translations).

Returning to the proof of the main theorem in Case 2, Lemma~\ref{lem:doubling_action} shows that 
$\Xi(\alpha) = \alpha g$ for some translation $g \in \C(Z)^+$. 
By composing $\Xi \colon \Bir(\A^1 \times Y) \to \Bir(\A^1 \times Z)$ with the conjugation 
by a suitable translation from $\C(Z)^+$, we may thus assume without loss of generality that 
$\Xi(\alpha) = \alpha$.

Let $\varphi \in \Bir(\A^1 \times Y)$ satisfy $\varphi \alpha \varphi^{-1} = \alpha^{-1}$.
Then $\varphi$ permutes the closures of the $\langle \alpha \rangle$-orbits in $\A^1 \times Y$, 
that is, it permutes the fibers of the projection $\A^1 \times Y \to Y$. 
Hence $\varphi = \varphi_1 \varphi_2$, where  $\varphi_1 \in \mathrm{PGL}_2(\C(Y))$
and $\varphi_2 \in \Bir(Y)$. Using the fact that $\alpha$ commutes with $\Bir(Y)$,
we have $\varphi_1 \alpha \varphi_1^{-1} = \alpha^{-1}$ inside $\mathrm{PGL}_2(\C(Y))$.
Since $\langle \alpha \rangle$ is dense in the subgroup $\C(Y)^*$ of $\mathrm{PGL}_2(\C(Y))$, 
we see that $\varphi_1$ normalizes $\C(Y)^*$ but doesn't centralize $\C(Y)^*$.
Hence, $\varphi \in w (\C(Y)^* \rtimes \Bir(Y))$ where $w$ is the involution in \eqref{eq:winvolution}.
We conclude that 
\[
   \Set{\varphi \in \Bir(\A^1 \times Y)}{\varphi \alpha \varphi^{-1} = \alpha^{-1}}
   = w (\C(Y)^* \rtimes \Bir(Y) )\, .
\]
The above calculation shows that 
$\Xi(w) = w \psi$ for some $\psi \in \C(Z)^* \rtimes \Bir(Z)$. 
Recall that $\Xi$ maps $\Aff_1(\C(Y)) \rtimes \Bir(Y)$ isomorphically onto
$\Aff_1(\C(Z)) \rtimes \Bir(Z)$, since these are the normalizers of the
translation groups.
Since $\mathrm{PGL}_2(\C(Y))$ is generated by $\Aff_1(\C(Y))$ and $w$, 
we deduce that $\Xi$ maps $\mathrm{PGL}_2(\C(Y))$ into 
$\mathrm{PGL}_2(\C(Z)) \rtimes \Bir(Z)$. 
Moreover, note that $\mathrm{PGL}_2(\C(Z))$ is precisely the subgroup generated 
by unipotent elements inside \\ $\mathrm{PGL}_2(\C(Z)) \rtimes \Bir(Z)$, 
since $\Bir(Z)$ contains no nontrivial unipotents. 
As $\Xi$ preserves unipotents by Lemma~\ref{lem:unip}, we obtain 
$\Xi(\mathrm{PGL}_2(\C(Y))) \subset \mathrm{PGL}_2(\C(Z))$.  Symmetrically,
$\Xi^{-1}(\mathrm{PGL}_2(\C(Z))) \subset \mathrm{PGL}_2(\C(Y))$, so in fact
$\Xi(\mathrm{PGL}_2(\C(Y))) = \mathrm{PGL}_2(\C(Z))$.

Another application of \cite[Theorem 5.6.10]{O'Meara} then shows that
$\C(Y)$ and $\C(Z)$ are abstractly isomorphic as fields.  This completes the
proof of \Cref{thm:intromain} in the case $s = 1$.
\end{proof}

\section{Birational transformations of non-ruled varieties}
\label{sect:nonruled}

\Cref{thm:intromain} shows that the group $\Bir(X)$ characterizes $X$ 
up to birational transformations and
automorphisms of the base field when $X$ is ruled.  This naturally 
suggests the question of what happens for other varieties. As a partial 
answer, we will show that non-uniruled varieties \textit{never} 
have the property above. We will end the section with some examples and questions.

\subsection{Proof of \Cref{thm:introBirX=BirXxC}}

Recall that a variety $X$ is \textit{uniruled} if there exists a 
variety $Y$ and a dominant rational map 
$\mathbb{A}^1 \times Y \dashrightarrow X$ that does not factor
through the second projection.  In other words, $X$ is covered
by rational curves \cite[Remarks~4.2(4)]{Debarre01}. In the non-uniruled case, our goal is to prove
the following more precise formulation of \Cref{thm:introBirX=BirXxC}:

\begin{theorem}\label{thm:BirX=BirXxC}
Let $X$ be an irreducible non-uniruled variety of dimension $n \geq 1$, and let $C$ be a very general curve of genus $g \geq \max \{ 3, n+1 \}$. Then the natural injection $\Bir(X) \to \Bir(X \times C)$ is an isomorphism.
\end{theorem}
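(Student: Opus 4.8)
The plan is to establish surjectivity of the natural injection $\Bir(X)\hookrightarrow\Bir(X\times C)$; injectivity is immediate. Fix $f\in\Bir(X\times C)$ and write $p_C\colon X\times C\to C$, $p_X\colon X\times C\to X$ for the two projections. I will first show that $f$ respects the fibration $p_C$, in the sense that $p_C\circ f=p_C$. For a general point $x\in X$, restricting the dominant map $p_C\circ f$ to the fiber $\{x\}\times C\cong C$ produces a morphism $C\to C$ (a rational map from a smooth proper curve, hence a morphism). Because $g\ge 2$, Riemann--Hurwitz forces any non-constant self-map of $C$ to be an isomorphism, and since $C$ is very general of genus $g\ge 3$ its automorphism group is trivial; thus the restriction is either constant or the identity.

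If the restriction is the identity for general $x$, then $p_C\circ f=p_C$ on a dense set and hence everywhere, so $f\in\Bir(X\times C/C)$. The alternative is that $p_C\circ f$ is constant along general fibers $\{x\}\times C$, in which case it factors as $\eta\circ p_X$ for a dominant rational map $\eta\colon X\dashrightarrow C$. I would exclude this using the very general hypothesis: the fixed variety $X$ dominates only countably many isomorphism classes of smooth curves of genus $\ge 2$ (a de Franchis--Severi type finiteness statement), while a very general point of the positive-dimensional moduli space $M_g$ lies outside any such countable union. Hence no dominant map $X\dashrightarrow C$ exists, and $f$ necessarily preserves $p_C$.

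Now $f\in\Bir(X\times C/C)$ is an algebraic family of birational self-maps of $X$ over $C$, i.e. a morphism $C\to\Bir(X)$, $c\mapsto f_c$, in the sense of \Cref{sect:mainthm}. The heart of the argument is to show this family is constant. Replacing $f$ by $c\mapsto f_c f_{c_0}^{-1}$ for a general fixed $c_0$, I may assume the family sends $c_0\mapsto\id$, so that its image lies in the connected component of the identity $\Bir(X)^{\circ}$. Here non-uniruledness enters: by \cite[Corollary~1]{Matsumura63} no positive-dimensional linear algebraic group acts faithfully and birationally on $X$, so by Chevalley's structure theorem the connected component $\Bir(X)^{\circ}$ is an abelian variety $A_X$. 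Since $A_X$ acts faithfully on $X$ with finite generic stabilizers, $\dim A_X\le\dim X=n$. The family therefore defines a morphism $C\to A_X$ carrying $c_0$ to $0$, which factors through the Albanese $\on{Jac}(C)\to A_X$. For very general $C$ the Jacobian $\on{Jac}(C)$ is a simple abelian variety of dimension $g\ge n+1>\dim A_X$, so $\Hom(\on{Jac}(C),A_X)=0$ and the morphism is constant. Thus $f_c=f_{c_0}$ for all $c$, i.e. $f=f_{c_0}\times\id_C$ lies in the image of $\Bir(X)$, which proves surjectivity.

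The main obstacle is the rigidity in this last step: controlling positive-dimensional families of birational self-maps of a non-uniruled variety and identifying $\Bir(X)^{\circ}$ with an abelian variety $A_X$ through which every such family factors. This is precisely where both hypotheses are used --- the bound $g\ge n+1$ ensures $\dim\on{Jac}(C)>\dim A_X$, while the very general hypothesis ensures $\on{Jac}(C)$ is simple and hence admits no nonzero homomorphism to the lower-dimensional $A_X$. A secondary technical point is the regularization needed to pass from the abstract family $f$ to a genuine morphism $C\to A_X$, together with the countability input excluding a dominant map $X\dashrightarrow C$ in the first step.
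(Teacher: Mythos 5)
Your proof runs in the opposite direction from the paper's, and the two halves trade difficulty in an interesting way. The paper shows that a birational map $\varphi$ of $X\times C$ contracts the fibers $\{x\}\times C$ of the projection to $X$, which requires its hardest input (\Cref{lem:XnotcoveredbyC}: a deformation-theoretic bound on dominant maps $C\times Y\to X$ via Kontsevich spaces and \cite{BDPP13}); once that is known, rigidity and $\Aut(C)=\{1\}$ finish the argument immediately. You instead show that $\varphi$ preserves the fibers of the projection to $C$. That first half of your argument is correct and genuinely cheaper: Riemann--Hurwitz plus $\Aut(C)=\{1\}$ gives the dichotomy, and the bad case is excluded by de Franchis--Severi finiteness for dominant maps $X\dashrightarrow C$, with no moduli-of-stable-maps machinery. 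The price is that you are left with an algebraic family $c\mapsto f_c$ in $\Bir(X)$ parametrized by $C$ that you must still prove is constant, whereas the paper's decomposition makes the corresponding step trivial.

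The gap is in that second half. You write that ``by Chevalley's structure theorem the connected component $\Bir(X)^{\circ}$ is an abelian variety.'' Chevalley's theorem applies to algebraic groups, and $\Bir(X)^{\circ}$ --- the identity component for the Zariski topology recalled in \Cref{sect:mainthm} --- is not known a priori to be one: it need not be closed and finite dimensional, and Matsumura's result only constrains the \emph{algebraic} subgroups of $\Bir(X)$. What your argument actually needs is that the connected family $\{f_cf_{c_0}^{-1}\}_{c\in C}$ through the identity is contained in (or generates) a finite-dimensional, hence algebraic, subgroup; only then do Weil regularization, Chevalley, and Matsumura identify that subgroup with an abelian variety of dimension at most $n$. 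This generation statement is false without the non-uniruledness hypothesis (a connected family of quadratic maps through the identity in $\Bir(\P^2)$ generates an infinite-dimensional group), so it cannot be obtained by soft arguments: it is essentially Hanamura's structure theorem for birational automorphism groups of non-uniruled varieties, a substantial external input that must be proved or cited. Granting it, the rest of your step is fine --- the pointed map $C\to A_X$ factors through $\on{Jac}(C)$, which for very general $C$ is simple of dimension $g\ge n+1>\dim A_X$, forcing the family to be constant --- but as written the crux of the proof is asserted rather than established.
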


Before proving this, let us begin with some notation. For $g \geq 3$, let $\M_{g}^{\circ} \subset \M_{g}$ denote the locus of curves $[C]$ that
\begin{enumerate}
\item satisfy $\Aut(C) = \{ 1 \}$, and
\item do not admit a map to another curve of genus $h > 0$.
\end{enumerate}
Property (1) is satisfied for an open dense subset of $\M_{g}$ by \cite{Popp69}. The Riemann-Hurwitz formula implies that there are finitely many values of $d, h > 0$ for which there exists a map $\phi$ of degree $d = \deg(\phi)$ from a curve of genus $g$ to a curve of genus $h > 0$, so property (2) is also satisfied for an open dense subset of $\M_{g}$. By construction, it follows that $\M_{g}^{\circ}$ is a fine moduli space \cite[Theorem 2.2.5]{Conrad07} which is representable by a quasiprojective scheme.

For any projective variety $X$ of dimension $n \geq 1$, let $V_{X} \subset \M_{g}^{\circ}$ denote the locus of curves $[C] \in \M_{g}^{\circ}$ such that there exists a quasi-projective variety $Y$ of dimension $n-1$ together with a dominant morphism $C \times Y \rightarrow X$.

\begin{lemma}
For $g \geq 3$, the locus $V_{X}$ is a countable union of constructible subsets of $\M_{g}^{\circ}$.
\end{lemma}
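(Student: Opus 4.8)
The plan is to realize $V_X$ as the union of the images in $\M_g^\circ$ of countably many quasi-projective parameter schemes under morphisms to $\M_g^\circ$, and then to invoke Chevalley's theorem, which guarantees that each such image is constructible. Since $\M_g^\circ$ carries a universal curve $\pi\colon \mathcal{C}\to \M_g^\circ$ (it is a fine moduli space), everything can be organized relatively over $\M_g^\circ$.

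The first step I would carry out is to replace the defining condition by a birational one. Using Hironaka's resolution (we are in characteristic $0$), any witnessing $Y$ is birational to a smooth projective variety $\overline{Y}$ of dimension $n-1$, and a dominant morphism $C\times Y\to X$ induces a dominant rational map $C\times \overline{Y}\dashrightarrow X$. Conversely, given a dominant rational map $g\colon C\times\overline{Y}\dashrightarrow X$ with $\overline{Y}$ smooth projective of dimension $n-1$: since $C\times\overline{Y}$ is smooth and $X$ is projective, the indeterminacy locus of $g$ has codimension $\geq 2$, and because $C$ is complete the projection $C\times\overline{Y}\to\overline{Y}$ is proper, so the image of this locus is a proper closed subset of $\overline{Y}$. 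On the complementary dense open $Y'\subseteq \overline{Y}$ the map $g$ then restricts to a genuine dominant morphism $C\times Y'\to X$, which exhibits $[C]\in V_X$. Hence $V_X$ coincides with the locus $V_X'$ of those $[C]$ admitting such a $g$.

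Next I would set up the parametrization of $V_X'$. Fix a relative polarization on $\mathcal{C}\to\M_g^\circ$. For each integer $M\geq 1$ and each pair of Hilbert polynomials $P,Q$, let $H=H_{M,P}$ be the quasi-projective locally closed subscheme of $\mathrm{Hilb}^P(\P^M)$ parametrizing smooth, geometrically integral subvarieties of dimension $n-1$, with universal family $\mathcal{Y}\to H$, and put $B=\M_g^\circ\times H$. Over $B$ one forms the projective family $\mathcal{C}\boxtimes\mathcal{Y}$ with fiber $C\times\overline{Y}$ over $([C],[\overline{Y}])$, and considers the relative Hilbert scheme $\mathrm{Hilb}^Q\big((\mathcal{C}\boxtimes\mathcal{Y})\times X / B\big)$, which is quasi-projective over $\C$. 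Inside it I would cut out the locus $R_{M,P,Q}$ of subvarieties $\Gamma\subset C\times\overline{Y}\times X$ that are integral, project birationally onto $C\times\overline{Y}$, and dominate $X$ — that is, the graph-closures of dominant rational maps $C\times\overline{Y}\dashrightarrow X$. Each of these is a fiberwise condition that is constructible in families (standard consequences of Chevalley's theorem together with semicontinuity of fiber dimension), so $R_{M,P,Q}$ is a constructible, hence finite-type, subscheme, and the composite $R_{M,P,Q}\to B\to \M_g^\circ$ has constructible image $I_{M,P,Q}$ by Chevalley. By the reformulation of the previous paragraph, $V_X=V_X'=\bigcup_{M,P,Q} I_{M,P,Q}$, and since there are only countably many triples $(M,P,Q)$ (integer-valued Hilbert polynomials form a countable set), this is a countable union of constructible subsets.

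The main obstacle is precisely the first step, the identification $V_X=V_X'$. The definition of $V_X$ only supplies a morphism out of a possibly non-compact product $C\times Y$, and a Hilbert/Hom-scheme parametrization requires a projective source; the real content is that one may simultaneously compactify $Y$ to a smooth projective $\overline{Y}$ and then recover an honest dominant morphism on a product $C\times Y'$ from the resulting rational map. This is exactly where the codimension-$\geq 2$ bound on the indeterminacy locus of a rational map from a smooth variety to a projective target, combined with the completeness of $C$, is indispensable: without the product structure being preserved, the image of the parameter space would only contain $V_X$ rather than equal it, and a subset of a constructible set need not be constructible. Once this equivalence is secured, the parametrization and the constructibility-in-families inputs are routine.
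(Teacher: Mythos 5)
Your proof is correct, but it takes a genuinely different route from the paper. The paper parametrizes the relevant maps by the Kontsevich moduli stacks $\overline{\M}_{g}(X,\beta)$ of stable maps, one for each class $\beta \in H_2(X,\Z)$: the locus in question is the set of $[C]$ over which the universal curve dominates $X$ under the evaluation map, constructibility of each piece follows from Chevalley, and countability comes from the countability of $H_2(X,\Z)$. You instead first reduce the defining condition to the existence of a dominant \emph{rational} map $C\times\overline{Y}\dashrightarrow X$ with $\overline{Y}$ smooth projective (the compactification/resolution step plus the codimension-$\geq 2$ indeterminacy argument, which correctly recovers an honest dominant morphism on $C\times Y'$), and then parametrize graph closures of such maps by relative Hilbert schemes over $\M_{g}^{\circ}\times \mathrm{Hilb}$, with countability coming from the countability of Hilbert polynomials. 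Your route is more elementary (no stable maps needed) and is arguably more careful about the equivalence between the family-of-morphisms definition of $V_X$ and the parametrizable condition, a point the paper leaves implicit. What the paper's approach buys is that it is not merely a proof of this lemma: the stable-map setup (the component $\H$ of $\overline{\M}_{g}(X,\beta)$ dominating $\overline{\M}_{g}$, the normal sheaf $N_h$, and the bound $h^0(N_h)\geq\dim\H$) is exactly the framework reused in the dimension count of Proposition~\ref{lem:XnotcoveredbyC}, so the lemma and the proposition are proved with the same machinery. Your argument establishes the lemma on its own but would not feed directly into that later deformation-theoretic estimate.
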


\begin{proof}

Fix an integer $g \geq 3$ and fix a class $\beta \in H_2(X,\Z)$. Let $\overline{\M}_{g}(X, \beta)$ denote the Kontsevich moduli stack of genus $g$ stable maps into $X$ with class $\beta$. There is a forgetful map $\overline{\M}_{g}(X, \beta) \rightarrow \overline{\M}_{g}$, and the universal family $\mathcal{C} \coloneqq \overline{\M}_{g,1}(X, \beta)$ admits an evaluation map which fits into the diagram:
\begin{center}
\begin{tikzcd}
\mathcal{C} \arrow[r, "\textbf{ev}"] \arrow[d] \arrow[rd, swap, "\pi"] & X \\
\overline{\M}_{g}(X, \beta) \arrow[r] & \overline{\M}_{g}
\end{tikzcd}
\end{center}
The locus of closed points $[C] \in \overline{\M}_{g}$ for which the restriction of $\textbf{ev}$ to $\pi^{-1}([C])$ dominates $X$ is by definition constructible, so the intersection with $\M_{g}^{\circ}$ is still constructible in $\M_{g}^{\circ}$. Taking the union over all such $\beta$ and noting that $H_2(X,\Z)$ is countable gives the desired statement.
\end{proof}

Next, we will need a proposition which is inspired by some of the ideas in \cite{Lou24}.

\begin{proposition}\label{lem:XnotcoveredbyC}
Let $X$ be an irreducible non-uniruled variety of dimension $n \geq 1$ over $\C$. If $g \geq \max \{ 3, n+1 \}$, then $V_{X} \subsetneq \M_{g}^{\circ}$ is a countable union of constructible subsets, each of which has dimension strictly less than $\dim \M_{g}^{\circ}$.
\end{proposition}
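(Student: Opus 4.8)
The plan is to reduce the statement to a bound on covering families of curves, and then run a dimension count whose only serious input is bend-and-break. Since non-uniruledness and the locus $V_X$ are birational invariants, I would first fix a smooth projective model of $X$. A point $[C] \in V_X$ provides an irreducible variety $Y$ with $\dim Y = n-1$ together with a dominant morphism $\rho \colon C \times Y \to X$; as $\dim(C \times Y) = n = \dim X$, the map $\rho$ is automatically generically finite. Restricting $\rho$ to a general fiber of the second projection gives a map $f_y \colon C \to X$ that is generically finite onto its image $B_y$, whose normalization $\widetilde{B}_y$ is finitely dominated by $C$. I would then rule out the degenerate cases using the defining properties of $\M_g^\circ$: if $\widetilde{B}_y$ has genus $0$ the $B_y$ are rational curves covering $X$, contradicting non-uniruledness; if $\widetilde{B}_y$ has genus $h$ with $0 < h$, then $C$ admits a nonconstant map to a positive genus curve, impossible unless it has degree one by property (2). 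Hence $f_y$ is birational onto $B_y$ for general $y$, and the $\{B_y\}_{y \in Y}$ form a covering family of curves on $X$, each birational to the fixed curve $C$, with finitely many members through a general point.

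Next I would organize all such maps into a parameter space. Let $\mathcal M$ denote the locus, inside the Kontsevich spaces $\bigcup_\beta \overline{\M}_g(X,\beta)$, of morphisms $f \colon C' \to X$ from smooth genus-$g$ curves that are birational onto their image and move in a family dominating $X$; this is a countable union of finite-type schemes, equipped with a moduli morphism $\mu \colon \mathcal M \to \M_g$ (whose image contains $V_X$ by the previous paragraph) and, after marking two points, an evaluation $\mathrm{ev}_2 \colon \mathcal M_2 \to X \times X$. The crux is to bound $\dim \mathcal M$ from above, and this is exactly where non-uniruledness enters: if $\mathrm{ev}_2$ had a positive-dimensional fiber over a general point $(x_1,x_2)$ of its image, then, since the first marked point already sweeps out $X$, we would obtain a positive-dimensional family of curves through the general point $x_1$ with a second point pinned, and bend-and-break would produce a rational curve through $x_1$, forcing $X$ to be uniruled. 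Thus $\mathrm{ev}_2$ has finite general fibers on every component, each component of $\mathcal M_2$ has dimension at most $\dim(X \times X) = 2n$, and therefore $\dim \mathcal M \leq 2n - 2$. I expect this bend-and-break step to be the main obstacle, since naive deformation theory only controls $\mathcal M$ through $h^1(C',f^*T_X)$, which cannot be bounded directly.

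Finally I would combine this with a lower bound on the fibers of $\mu$. Over a general $[C] \in V_X$ the fiber of $\mu$ contains the family $\{f_y\}_{y \in Y}$ of maps with source $C$, so it has dimension at least $n-1$; pulling back $V_X$ into $\mathcal M$ and comparing dimensions yields $\dim V_X + (n-1) \leq \dim \mathcal M \leq 2n-2$, hence $\dim V_X \leq n-1$. The same bound applies to each constructible piece $V_{X,\beta}$ produced in the preceding lemma. It then remains to insert the numerics: the hypothesis $g \geq n+1$ gives $n-1 \leq g-2 < 3g-3 = \dim \M_g^\circ$, so every piece has dimension strictly less than $\dim \M_g^\circ$ (the assumption $g \geq 3$ is what makes $\M_g^\circ$ available, and property (2) was used above). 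Since $\C$ is uncountable and $\M_g^\circ$ is irreducible, a countable union of such lower-dimensional constructible sets cannot cover $\M_g^\circ$, so $V_X \subsetneq \M_g^\circ$, completing the proof.
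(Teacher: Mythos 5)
Your first paragraph (reducing to maps $f_y\colon C\to X$ birational onto their image via properties (1)--(2) of $\M_g^{\circ}$ and non\nobreakdash-uniruledness) and your final numerics are fine, but the central step --- the upper bound $\dim\mathcal M\leq 2n-2$ via bend-and-break --- is a genuine gap, and in fact the claim it rests on is false. Bend-and-break produces a rational curve from a positive-dimensional family of morphisms with \emph{fixed source curve} and a fixed point; it says nothing when the source curve varies in moduli, which is exactly what happens in your $\mathcal M$. Concretely, let $X$ be a general polarized K3 surface of genus $g\geq 3$ with $L^2=2g-2$: the smooth members of $|L|$ are genus-$g$ curves, embedded (hence birational onto their images), moving in a $g$-dimensional family that dominates $X$, and the sublinear system through two general points has dimension $g-2>0$. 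So $\mathrm{ev}_2$ has positive-dimensional general fibers, $\dim\mathcal M\geq g>2=2n-2$, and yet $X$ is not uniruled. The point is that each individual curve in $|L|$ is rigid as a map from its own (fixed) abstract source, so no bend-and-break applies; the positive-dimensionality is entirely absorbed by the variation of the modulus of the source. Your argument only uses the isotriviality of the family $C\times Y\to X$ to get birationality onto the image and the lower bound on the fibers of $\mu$, but drops it precisely where it is indispensable, namely in the upper bound on the dimension of the space of curves.

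The paper's proof keeps the isotriviality in play at that step: assuming a component of $V_X$ is dense, it finds a component $\H$ of some $\overline{\M}_g(X,\beta)$ dominating $\overline{\M}_g$ with $\dim\H\geq 3g-3$, and bounds the tangent space at a general $[h\colon C\to X]$ by $h^0(N_h)$. The product structure $C\times Y\to X$ forces a trivial subsheaf $\O^{\oplus n-1}\hookrightarrow N_h$ with torsion quotient, whence $h^0(N_h)\leq n-1+c_1(N_h)=n-1+2g-2-(h^*K_X\cdot C)$; non-uniruledness enters through \cite[Theorem 0.2 and Corollary 0.3]{BDPP13} (pseudoeffectivity of $K_X$), giving $(h^*K_X\cdot C)\geq 0$ and a contradiction with $g>n$. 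Note that this uses a much stronger input than bend-and-break, which is a further sign that the elementary route you propose cannot suffice. If you want to salvage your framework, you would have to replace the bound on $\dim\mathcal M$ by a bound on the locus of maps admitting an isotrivial $(n-1)$-dimensional dominating extension, which essentially forces you back to the normal-sheaf computation above.
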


\begin{proof}
    Without loss of generality, after compactifying and passing to a resolution we may assume that $X$ is smooth and projective.
    
    Suppose for contradiction that some component of $V_{X}$ is Zariski-dense in $\M_{g}^{\circ}$. Since $H_{2}(X, \Z)$ is countable and $\M_{g}$ is uncountable, there exists a class $\beta \in H_2(X,\Z)$ and an irreducible component $\H$ of $\overline{\M}_{g}(X, \beta)$ mapping surjectively onto $\overline{\M}_{g}$ such that the general fiber of $\mathcal{C}_{\H} \rightarrow \overline{\M}_{g}$ (here $\mathcal{C}_{\H}$ is the universal family over $\H$) dominates $X$ under the evaluation map.
    Therefore, $\dim \H \geq 3g-3 + n-1 \geq 3g-3$. The general element of $\H$ corresponds to a morphism $h \colon C \rightarrow X$, and from the definition of $V_{X}$ we see that $h$ extends to a dominant morphism $C \times Y \rightarrow X$, for some quasiprojective variety $Y$ of dimension $n-1$.

    Now let $N_{h}$ denote the normal sheaf, which sits in the exact sequence
    \begin{equation}\label{eq:tangentnormalseq}
    0 \rightarrow T_{C} \rightarrow h^{\ast}T_{X} \rightarrow N_{h} \rightarrow 0.
    \end{equation}
    Recall that the dimension of the Zariski tangent space to any point $[h: C \rightarrow X] \in \overline{\M}_{g}(X, \beta)$ is bounded from above by $h^{0}(N_{h})$ (cf. \cite[proof of Theorem 3.5]{Lou24}). 
    Since $h$ is a general member of $\H$, it follows that $h^{0}(N_{h}) \geq \dim \H$. Next, we will bound $h^{0}(N_{h})$ from above.
    
    \textbf{Claim.} There is an injection of sheaves $\O^{\oplus n-1} \hookrightarrow N_{h}$.
    
    Intuitively, this follows from the fact that $C$ moves in an ($n-1$)-dimensional family. To make this precise, let $C_{y} \cong C$ denote the general member of the family $C \times Y$, which maps to $X$ and let $h \colon C_{y} \rightarrow X$ be the map. We can write down the following diagram of sheaves:
    \begin{center}
    \begin{tikzcd}
    0 \arrow[r] & T_{C_{y}} \arrow[r] \arrow[d, equals] & T_{C \times Y} \big|_{C_{y}} \arrow[r] \arrow[d, "(\ast)"] & N_{C_{y}/C \times Y} \arrow[r] \arrow[d, "(\ast \ast)"] & 0 \\
    0 \arrow[r] & T_{C} \arrow[r] & h^{\ast}T_{X} \arrow[r] & N_{h} \arrow[r] & 0
    \end{tikzcd}
    \end{center}
    Here, the map $(\ast)$ is the restriction of $T_{C \times Y} \rightarrow h^{\ast}T_{X}$ to $C_{y} \cong C$, so it is generically injective. But $T_{C \times Y}|_{C_{y}}$ is locally free (in particular torsion free) so $(\ast)$ is injective, which implies by the Four-Lemma that the map $(\ast \ast)$ is also injective. Since $C_{y}$ is the general fiber of a map, we have the isomorphism $N_{C_{y}/C \times Y} \cong \O_{C_{y}}^{\oplus n-1}$, which gives the desired injection of sheaves.
    
    By comparing ranks, we arrive at an exact sequence of sheaves
    \[ 0 \rightarrow \O^{\oplus n-1} \rightarrow N_{h} \rightarrow \tau \rightarrow 0, \]
    where $\tau$ is a torsion sheaf, i.e., it is supported along a finite set of closed points on $C$. This implies that
    \[ h^{0}(N_{h}) \leq n - 1 + \text{length}(\tau) = n-1 + c_{1}(N_{h}) = n - 1 - (h^{\ast}K_{X}\cdot C) + 2g-2, \]
    where the last equality follows from \eqref{eq:tangentnormalseq}. Since $X$ is non-uniruled and $C$ is a covering family, by 
    \cite[Theorem 0.2 and Corollary 0.3]{BDPP13} we know that $(h^{\ast}K_{X} \cdot C) \geq 0$. Since $g > n$, this contradicts the fact that $h^{0}(N_{h}) \geq 3g - 3$.  
\end{proof}

Now we are ready to show:

\begin{proof}[Proof of Theorem~\ref{thm:BirX=BirXxC}]
    Choose a curve $[C] \in \M_{g}^{\circ} \setminus V_{X}$, which is possible due to Proposition~\ref{lem:XnotcoveredbyC}. 
    Let $\varphi \in \Bir(X \times C)$ be a birational transformation and let $\pi_{1}, \pi_{2}$ be the projection maps to $X$ and $C$, respectively. It suffices to show that $\varphi$ is of the form $(\psi, \id_{C})$ for some $\psi \in \Bir(X)$.
    
    First, we claim that for a general point $x \in X$, the image $(\pi_{1} \circ \varphi)(\{ x \} \times C)$ is a point in $X$. Suppose for contradiction that this is not the case. Since $C$ is a general curve of genus $g \geq 2$, it does not admit any maps of degree $\geq 2$ to any curve other than $\P^{1}$. The map $\varphi$ is dominant and $X$ is not uniruled, so the restriction $(\pi_{1} \circ \varphi)|_{\{ x \} \times C}$ must then be birational onto its image. Now setting $Y$ to be a general hyperplane section of $X$, we see that $\pi_{1} \circ \varphi|_{Y \times C}$ defines a dominant rational map $Y \times C \dashrightarrow X$, which contradicts our choice of $[C]$.

    Thus, for a general point $x \in X$, the image $(\pi_{1} \circ \varphi)(\{ x \} \times C)$ is indeed a point in $X$. By rigidity, there exists a map $\psi \colon X \dashrightarrow X$ such that the diagram below commutes:
    \begin{center}
    \begin{tikzcd}
        X \times C \arrow[r, dashed, "\varphi"] \arrow[d, swap, "\pi_{1}"] & X \times C \arrow[d, "\pi_{1}"] \\
        X \arrow[r, dashed, "\psi"] & X
    \end{tikzcd}
    \end{center}
    Since $\varphi$ is birational, the map $\psi$ must also be birational. Furthermore, since $C$ is general we see that $\Aut(C) = \{ 1 \}$. This implies that $\varphi = (\psi, \id_{C})$, which is what we want.
\end{proof}

\subsection{Examples and Questions}

Combining \Cref{thm:intromain} and \Cref{thm:introBirX=BirXxC}, the only remaining case
is uniruled but non-ruled varieties. Note that of course there are uniruled varieties with trivial birational transformation group. In fact, many of these satisfy the same property as in the theorem above.

\begin{example}
Let $X$ be a smooth variety such that $\Bir(X)$ is discrete and $H^{1}(X,\O_{X}) = 0$ (e.g. any birationally superrigid Fano variety such as a smooth index 1 Fano hypersurface $X_{d} \subset \P^{n+1}$, where $d = n+1$ and $n \geq 3$). Let $C$ be any curve of genus $g \geq 3$ with $\Aut(C) = \{ 1 \}$. Since $H^{1}(X, \O_{X}) = 0$, there are no non-constant rational maps from $X$ to $C$. Thus, $\Bir(X \times C) = \Bir(X\times C, \mathrm{pr}_2) \cong \Bir(X)$, where the last isomorphism follows from the fact that $\Bir(X)$ is discrete and hence any birational transformation of $X \times C$ restricts to the same birational transformation of $X$ as we vary $p \in C$.
\end{example}

However, it is plausible that some non-ruled but uniruled 
varieties with large $\Bir(X)$ could be determined (up to birational equivalence
and field automorphism) by their birational 
transformation groups.  Nevertheless, proving this seems much more difficult 
since in this case
$\Bir(X)$ does not contain nontrivial connected algebraic subgroups.

\begin{example}
Let $X$ be a smooth cubic threefold.  Then $X$ is uniruled (in fact 
unirational) but not ruled.  Cantat has asked whether there exists some
$Y$, not necessarily a cubic threefold, such that $\Bir(X) \cong \Bir(Y)$ as groups, but the fields $\C(X)$ and $\C(Y)$ are not abstractly isomorphic. We remark that the strategy employed in \Cref{thm:BirX=BirXxC} does not work
in this case.  Indeed, given any point $x \in X$, there is a birational 
involution on $X$ defined at a general point $y \in X$ by sending $y$ to the third intersection point of $X$ with the line spanned by $x$ and $y$.  In 
particular, there is an injective algebraic family of birational transformations
$X \hookrightarrow \Bir(X)$. Now choose any line $\ell$ in $X$.  For any 
positive-dimensional
variety $V$, a nonconstant rational function on $V$ gives a dominant rational map 
$V \dashrightarrow \mathbb{P}^1 \cong \ell$, which we can compose with
$\ell \subset X \rightarrow \Bir(X)$ to get a nontrivial family of birational
transformations parametrized by $V$.  This proves that the natural injection
$\Bir(X) \hookrightarrow \Bir(X \times V)$ is not an isomorphism.
\end{example}

In summary, the question of whether $\Bir(X)$ determines $X$ (up to birational transformation
and field automorphism) among all varieties is answered positively for ruled 
varieties by \Cref{thm:intromain} and negatively for non-uniruled varieties by
\Cref{thm:introBirX=BirXxC}.  As discussed in the previous section, the situation
for uniruled but non-ruled varieties is less clear, with a variety of possible behaviors.  We pose the following question, which we hope to investigate further 
in the future.

\begin{question}
\label{quest:nonruled}
For which uniruled non-ruled varieties $X$ over $\C$ does the group $\Bir(X)$ characterize $X$ up to birational transformations and automorphisms of the base field?
\end{question}

Even when \Cref{thm:intromain} applies, the geometric 
relationship between two ruled varieties $V_1$ and $V_2$ with isomorphic birational transformation groups is not entirely clear.  The theorem shows that
some birational models of $V_1$ and $V_2$ are isomorphic as schemes
over $\Z$.  In specific
cases, such as when $V_1$ is rational, we can conclude that $V_1$ and $V_2$
are actually birationally equivalent, since the property of the function
field being a purely transcendental extension of $\C$ is preserved by base change
by a $\C$-automorphism.  In the more general case, we can ask:

\begin{question}
\label{quest:conj_inv}
For which varieties does base change by a field automorphism of $\C$ not change the birational type?
\end{question}

By \Cref{thm:intromain}, any \textit{ruled} variety with this property is actually determined up to birational 
equivalence by its group of birational transformations.

A variety $X$ satisfies the property in \Cref{quest:conj_inv} if $X$ is birational
to a $\C$-variety is defined over, say, $\Q$.
However, the property does not always hold.
For instance, take
two very general elliptic curves $E_i, i = 1,2$ and let
$V_i = \mathbb{P}^1 \times E_i$.  Then $V_1$ and $V_2$ are ruled varieties 
isomorphic as schemes over $\Z$ and with isomorphic birational transformation
groups.  Nevertheless, $V_1$ and $V_2$ are not birationally equivalent.

\bibliographystyle{amsalpha}
\bibliography{biblio}

\end{document}